\documentclass[10pt]{amsart}

\usepackage{a4}
\usepackage{amssymb}
\usepackage{amsmath}
\usepackage{amsfonts}
\usepackage{fancybox}
\usepackage{empheq}
\usepackage{mathtools}
\usepackage{calrsfs}

\usepackage[table]{xcolor}

\usepackage{color}
\usepackage{comment}

\usepackage[all,cmtip]{xy}

\usepackage{todonotes}
\usepackage{cleveref}

\usepackage{tikz}

%


%

\newtheorem{theorem}{Theorem}
\newtheorem{lemma}[theorem]{Lemma}
\newtheorem{corollary}[theorem]{Corollary}
\newtheorem{proposition}[theorem]{Proposition}
\newtheorem{example}[theorem]{Example}
\newtheorem{remark}[theorem]{Remark}
\newtheorem{definition}[theorem]{Definition}

\newcommand{\Ker}{\rm Ker}

\newcommand{\Aut}{\mathrm{Aut}}

\newcommand{\Z}{\mathbb{Z}}

\newcommand{\eq}[1]{\begin{equation}#1\end{equation}}
\newcommand{\B}[1]{\bar{#1}}
\newcommand{\Gal}{\mathrm{Gal}}

\title[HKG-covers]{A cohomological treatise of HKG-covers with applications to the Nottingham group}

\date{\today}

\author[A. Kontogeorgis]{Aristides Kontogeorgis}
\address{Department of Mathematics, National and Kapodistrian  University of Athens\\
Panepistimioupolis, 15784 Athens, Greece}
\email{kontogar@math.uoa.gr}
\urladdr{http://users.uoa.gr/~kontogar}

\author[I. Tsouknidas]{Ioannis Tsouknidas }
\address{Department of Mathematics, National and Kapodistrian University of Athens\\
Panepistimioupolis, 15784 Athens, Greece}
\email{iotsouknidas@math.uoa.gr}

\date \today

\makeatletter
\newcommand{\aprod}{\mathop{\operator@font \hbox{\Large$\ast$}}}
\makeatother

\begin{document}
\begin{abstract}
We characterize Harbater-Katz-Gabber curves in terms of a family of cohomology classes satisfying a compatibility condition. Our construction is applied to the description of finite subgroups of the Nottingham Group.
\end{abstract}

\maketitle


\section{Introduction}
In this article we use and extend results from previous work of the first author together with S. Karanikolopoulos \cite{Karanikolopoulos2013} on HKG-curves. We will work over an algebraically closed field $k$ of characteristic $p\geq 5$.

\begin{definition}
A Harbater-Katz-Gabber cover (HKG-cover for short) is a Galois cover $X_{\mathrm{HKG}}\rightarrow \mathbb{P}^1$, such that there are at most two  branched $k$-rational points $P_1,P_2 \in \mathbb{P}^1$, where $P_1$ is tamely ramified and $P_2$ is totally and wildly  ramified. All other geometric points of $\mathbb{P}^1$ remain unramified. In this article we are mainly interested in $p$-groups so our HKG-covers have a unique ramified point, which is totally and wildly ramified.  
\end{definition}

Work of Harbater \cite{harbater1980moduli} and of Katz and Gabber \cite{katz1986local} showed that any finite subgroup $G$ of $\Aut(k[[t]])$ can be associated with an HKG-curve $X$. More precisely, $G$ is the semi-direct product of a cyclic 
group of order prime to $p$ (the maximal tamely ramified quotient) by a normal $p$-subgroup (the wild inertia group). We are interested in the latter group, so from now on we will replace the initial group $G$ with the latter, finite $p$-subgroup of $\Aut(k[[t]])$. The HKG-curves play an important role in  the deformation theory of curves with automorphisms and to the celebrated proof of Oort conjecture, \cite{Obus12,ObusWewers, MR3591155, MR2441248, MR2919977, MR3194816}.

Working with the HKG-curve $X$ allows us to use several global tools like the genus, the $p$-rank of the Jacobian etc 
to the study of $k[[t]]$.
In this article we will employ the Weierstrass semigroup attached to the unique ramified point $P$, and we will use the results of \cite{Karanikolopoulos2013} on relating the structure of the Weierstrass semigroup to the jumps of the ramification filtration.  

More precisely, to the HKG-cover
there is a Weierstrass semigroup $H(P)$
attached to the unique wildly  ramified point $P$. 
An arithmetic semigroup, and in particular the Weierstrass semigroup, is always finitely generated, i.e. there are $\bar{m}_1,\ldots, \bar{m}_h \in \mathbb{N}$ such that
\[
H(P)=\mathbb{Z}_+ \bar{m}_1 + \cdots + \mathbb{Z}_+ \bar{m}_h.
\]
We will denote by $m_i$ the $i$-th element of $H(P)$, while $\bar{m}_i$ will denote the $i$-th generator of the semigroup.
For every element $m_i \in H(P)$ we will select a function $f_i$ with $(f_i)_\infty=m_i P$. Also each element $\bar{m}_i$ corresponds to some function 
$\bar{f}_i$ in the function field of the curve.
This selection is not unique and we will study later what happens by different choices either of  $f_i$ or  $\bar{f}_i$.
The ramification filtration gives rise to a series of subgroups of the group $G$, see eq. (\ref{ramfiltratseq}), which correspond to a sequence of subfields
$k(X/G)=F_0\subset \cdots \subset F_{s+1}=k(X)$
of the function field $F=k(X)$ of the curve $X$.
By the properties of the ramification filtration, each extension  $F_{i+1}/F_i$, $i=0,\ldots,s$ is abelian. We will see that 
$F_{i+1}=F_{i}(\bar{f}_i)$.  
 
One of the main results of this article is the 
classification  and description of  the Galois actions in HKG-covers  in terms of group cohomology. 
Assume that $X \rightarrow \mathbb{P}^1$ is an 
HKG-cover with a unique wildly ramified point $P\in X$. Consider the ring
of holomorphic functions outside the point $P$ 
\[
\mathbf{A}=\bigcup_{\nu=0}^\infty L( \nu P). 
\]
This ring is equipped with a valuation corresponding to $P$ and elements of $\mathbf{A}$ of valuation  smaller or equal than $\nu$, i.e. the Riemann-Roch space $L(\nu P)$, give rise to a vector space of finite dimension. Notice that these kind of rings are essential in the general definition of Drinfeld modules, see \cite[chap. 4]{Goss1996}.


Write $s$ for the index of the biggest $\bar{m}_i$, such that $\bar{m}_i<m$. Every intermediate extension $F_{i+1}/F_i$ is elementary abelian hence;
isomorphic to $(\mathbb{Z}/p\mathbb{Z})^{n_i}$.
Set $\mathbf{n}=(n_1,\ldots,n_s) \in \mathbb{N}^s$. In eq. (\ref{W-def}) we define the vector space  $k_{\mathbf{n},m}[\bar{f}_0,\ldots,\bar{f}_s]$  which will be considered as a $G$-module and prove that it is equal to $L\big( (m-1)P\big)$.

The polynomial ring $k[f_0]$ is the semigroup ring corresponding to the Weierstrass semigroup $\mathbb{N}$ of the projective line, which has bounded part the vector space $k_m[f_0]$ of polynomials of degree $\leq m$.  
The module $k_{\mathbf{n},m}[\bar{f}_0,\ldots,\bar{f}_s]$
plays a similar role for the more general setting of the Weierstrass semigroup of the HKG-cover.

The action of $G$ will be described by the following:
\begin{theorem}
\label{mainTH1}
The $G$-module structure of $k_{\mathbf{n},m}[\bar{f}_0,\ldots,\bar{f}_s]$
is described by a series of cohomology classes $\bar{C}_i \in H^1(\mathrm{Gal}(F_{i+1}/F_1),k_{\mathbf{n},\bar{m}_{i}}[\bar{f}_0,\bar{f}_1,\ldots,\bar{f}_{i-1}])$.
These classes  restricted to the elementary abelian group $\mathrm{Gal}(F_{i+1}/F_{i})$ 
define the additive polynomials $P_i(Y)$ 
which in turn describe the elementary abelian extensions $F_{i+1}/F_i$.
Moreover, the  additive polynomials $P_i$ define maps 
\[
H^1(G,k_{\mathbf{n},\bar{m}_i}[\bar{f}_0,\bar{f}_1,\ldots,\bar{f}_{i-1}])
 \longrightarrow 
H^1(G,k_{\mathbf{n},\bar{m}_i}[\bar{f}_0,\bar{f}_1,\ldots,\bar{f}_{i-1}])
\]
and the cocycles $\bar{C}_i$ are in the kernel of $P_i$,
that is 
\begin{equation}
\label{compat-eq}
P_i(\bar{C}_i)=0 \in H^1(G,k_{\mathbf{n},\bar{m}_i }[\bar{f}_0,\bar{f}_1,\ldots,\bar{f}_{i-1}]).
\end{equation}

Conversely every such series of elements $\bar{C}_i\in 
H^1(G,
k_{\mathbf{n},\bar{m}_i} [\bar{f}_0,\bar{f}_1,\ldots,\bar{f}_{i-1}])$, satisfying eq. (\ref{compat-eq}) defines in a unique way a HKG-cover. 
\end{theorem}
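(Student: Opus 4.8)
The plan is to prove the two implications separately, in both cases working with the tower $F_0\subseteq F_1\subseteq\cdots\subseteq F_{s+1}=k(X)$ of the ramification filtration. Since $G$ is a $p$-group the tame step is trivial, so $F_1=F_0$, every $\mathrm{Gal}(F_{i+1}/F_1)$ is a quotient of $G$, and --- as the module $k_{\mathbf n,\bar m_i}[\bar f_0,\dots,\bar f_{i-1}]$ lies in $F_i$ and is fixed by $\mathrm{Gal}(F_{s+1}/F_{i+1})$ --- inflation identifies each class in question with one in $H^1(G,-)$. Throughout I would use the identification $k_{\mathbf n,m}[\bar f_0,\dots,\bar f_s]=L\big((m-1)P\big)$ together with the reduction of monomials modulo the relations $P_j(\bar f_j)=g_j$, $g_j\in F_j$ with poles only at $P$, that cut out the steps $F_{j+1}/F_j$.

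\textbf{From the cover to the cocycles.} For each $i$ I would set $\bar C_i(\sigma):=\sigma(\bar f_i)-\bar f_i$; the cocycle relation $\bar C_i(\sigma\tau)=\sigma\,\bar C_i(\tau)+\bar C_i(\sigma)$ is immediate since $\sigma$ is a $k$-algebra homomorphism. That $\bar C_i$ takes values in $k_{\mathbf n,\bar m_i}[\bar f_0,\dots,\bar f_{i-1}]$ follows because $G$ fixes the unique ramified point $P$, hence preserves $v_P$, and because $G$, being a $p$-group, acts trivially on leading coefficients at $P$ (there is no nontrivial homomorphism $G\to k^{*}$ in characteristic $p$): so $\sigma(\bar f_i)-\bar f_i$ has pole order $<\bar m_i$, that is, it lies in $L\big((\bar m_i-1)P\big)\subseteq F_i$. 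Restricting to $\mathrm{Gal}(F_{i+1}/F_i)$, which acts trivially on this module, $\bar C_i$ becomes a homomorphism onto the translation group $V_i\cong(\mathbb Z/p\mathbb Z)^{n_i}$ defining $F_{i+1}=F_i(\bar f_i)$; the associated monic additive polynomial $P_i(Y)=\prod_{v\in V_i}(Y-v)$ has kernel $V_i$ and $g_i:=P_i(\bar f_i)\in F_i$ (being $\mathrm{Gal}(F_{i+1}/F_i)$-invariant). Since $P_i$ is additive with coefficients fixed by $G$ it is compatible with the differential, hence induces the stated map on cohomology, and
\[
P_i(\bar C_i)(\sigma)=P_i\big(\sigma(\bar f_i)-\bar f_i\big)=\sigma\big(P_i(\bar f_i)\big)-P_i(\bar f_i)=\sigma(g_i)-g_i
\]
exhibits $P_i(\bar C_i)$ as the coboundary of $g_i$; this is (\ref{compat-eq}).

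\textbf{From the cocycles to the cover.} Here I would rebuild the tower by induction, constructing at each stage the field together with its $G$-action. Start from $F_0=F_1=k(\bar f_0)$ with trivial action. Given $F_i$ and $k_{\mathbf n,\bar m_i}[\bar f_0,\dots,\bar f_{i-1}]\subseteq F_i$, use (\ref{compat-eq}) to choose an element $h_i$ with $\delta(h_i)=P_i(\bar C_i)$, put $F_{i+1}:=F_i[\bar f_i]/\big(P_i(\bar f_i)-h_i\big)$ and extend the action by $\sigma(\bar f_i):=\bar f_i+\bar C_i(\sigma)$. The identity $\sigma\big(P_i(\bar f_i)-h_i\big)=P_i(\bar f_i)-h_i$ --- which is precisely $P_i(\bar C_i(\sigma))=(\delta h_i)(\sigma)$ --- makes this action well defined; since $P_i'$ is a nonzero constant, $P_i(Y)-h_i$ is separable, $F_{i+1}/F_i$ is elementary abelian with group $V_i=\ker P_i$, and $\mathrm{Gal}(F_{i+1}/F_1)$ is the prescribed quotient of $G$. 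Because each $h_i$ has poles only at $P$, the cover defined by $P_i(\bar f_i)=h_i$ is unramified over $\mathbb P^1$ away from the image of $P$ and totally, wildly ramified above it; hence the smooth model $X$ of $F_{s+1}$ is an HKG-cover realizing exactly the given module and cocycles. Different cocycle representatives, and the remaining freedom in solving $\delta(h_i)=P_i(\bar C_i)$, are absorbed by substitutions $\bar f_i\mapsto\bar f_i+a$ leaving $F_{i+1}$ unchanged, which yields uniqueness up to isomorphism.

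\textbf{Main obstacle.} The conceptual content is light: (\ref{compat-eq}) is exactly the obstruction to propagating the $G$-action through the Artin-Schreier-type step $F_{i+1}/F_i$. The real work lies in the pole-order bookkeeping concealed in the phrase ``$P_i$ defines a map on cohomology'': one must pin down, using the reduced-monomial description and $k_{\mathbf n,m}[\cdots]=L((m-1)P)$, precisely which of the modules $k_{\mathbf n,\ast}[\bar f_0,\dots]$ contains $g_i=P_i(\bar f_i)$ and the values of $P_i(\bar C_i)$, and verify that $P_i$ and inflation act compatibly on this chain of modules. In the converse, the analogous point is checking that the data $(\bar m_i,\mathbf n,P_i,\bar C_i)$ is internally consistent --- in particular that $p^{n_i}\bar m_i\in\langle\bar m_0,\dots,\bar m_{i-1}\rangle$ while $\bar m_i\notin\langle\bar m_0,\dots,\bar m_{i-1}\rangle$ --- so that the reconstructed tower closes up to a genuine HKG-cover with the prescribed Weierstrass semigroup and ramification filtration, rather than one with extra branch points or a smaller Galois group.
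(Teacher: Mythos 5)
Your outline follows the paper's own route almost exactly: define $\bar C_i(\sigma)=\sigma(\bar f_i)-\bar f_i$, build the additive polynomial $P_i(Y)=\prod_{v\in V_i}(Y-v)$ from the values on $\mathrm{Gal}(F_{i+1}/F_i)$, get the compatibility by applying $\sigma$ to $P_i(\bar f_i)=D_i$ (this is eq. (\ref{tsouk7}), lemma \ref{actA} and theorem \ref{kernelCoho}), and recover the cover in the converse by adjoining step by step a root of $P_i(Y)=D_i$. However, one step is asserted where the paper has to import a nontrivial result. You claim $P_i$ has ``coefficients fixed by $G$'', but your only control on the cocycle values is the leading-coefficient argument, which shows $\sigma(\bar f_i)-\bar f_i\in L\big((\bar m_i-1)P\big)$; it does not show that for $\sigma\in\mathrm{Gal}(F_{i+1}/F_i)$ the value $\bar C_i(\sigma)$ is a constant in $k^*$. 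A priori $V_i$ is just an $\mathbb F_p$-subspace of $F_i$, so $P_i$ has coefficients in $F_i$, and then neither your identity $P_i\big(\sigma(\bar f_i)-\bar f_i\big)=\sigma\big(P_i(\bar f_i)\big)-P_i(\bar f_i)$ nor the induced map on $H^1(G,-)$ (which in lemma \ref{actA} needs $P\in k[Y]$) is justified. The constancy of the restricted values is exactly the nontrivial link between the representation filtration and the Weierstrass semigroup that the paper quotes from \cite[prop.~27]{Karanikolopoulos2013} and uses in theorem \ref{thecomp}; in characteristic $p$ the absence of characters $G\to k^*$ only kills the leading term, it does not force the difference to be constant. (One can salvage $G$-invariance of the coefficients by observing that $\tau V_i=V_i$, via normality of $\mathrm{Gal}(F_{i+1}/F_i)$ and the cocycle identity, but you neither state nor prove this, and with coefficients merely in $F_1$ rather than $k$ the pole-order bookkeeping in lemma \ref{actA} would still need reworking.)

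In addition, the two points you yourself defer to the ``main obstacle'' paragraph are precisely what the paper settles and what the theorem needs: that $P_i(\bar C_i(\sigma))$ actually lies in $k_{\mathbf{n},\bar m_i}[\bar f_0,\ldots,\bar f_{i-1}]$ (in the paper this follows from eq. (\ref{tsouk7}) together with $D_i\in F_i$ having poles only at $P$ and proposition \ref{generatorsofGS}), and, in the converse, that the tower built from $P_i(\bar f_i)=D_i$ is totally wildly ramified over $P$, unramified elsewhere, and realizes the full group $G$ with the prescribed semigroup. Flagging these as obstacles without closing them leaves the proposal as a reproduction of the paper's sketch rather than a complete argument; the constancy issue above is the one place where the missing input is a genuine theorem rather than bookkeeping.
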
 
\begin{proof}
We now sketch the ingredients of the proof. The precise proof will be given in the next sections of the article. 
Notice that the element $\bar{C}_i$ is the image of the class $\bar{f}_i \in L(\bar{m}_iP) /L\big( (\bar{m}_i-1) P\big)$ under the $\delta$ map in 
\[
\cdots \rightarrow 
H^0(G, L(\bar{m}_iP) /L\big( (\bar{m}_i-1) P\big)) 
\stackrel{\delta}{\longrightarrow} 
H^1 (G, L(\bar{m}_i P))
\rightarrow \cdots
\]
coming from the group cohomology long exact sequence corresponding to the short exact 
\[
\xymatrix{
  0 \ar[r] & 
  L\big(\bar{m}_i-1) P\big) \ar[r] &
  L(\bar{m}_i P) \ar[r] &
  L(\bar{m}_iP) /L\big( (\bar{m}_i-1) P\big) \ar[r] & 
  0
}
\]
Since the space $L(\bar{m}_iP) /L\big( (\bar{m}_i-1) P\big)$ is one dimensional the class $\bar{f}_i$ can be replaced by $\lambda \bar{f}_i$ for some $\lambda\in k^*$. 
In lemma \ref{coboundaries} we will explain further how the change of 
the semigroup generators gives rise to coboundaries.
  It turns out that changing $\bar{f}_i$ to $\lambda \bar{f}_i$ for $\lambda \in k^*$ changes  the additive polynomial $P_i$ to $\lambda^{p^{n_i-1}} P_i$ and the cocycle $\bar{C}_i$ to $\lambda \bar{C}_i$. This forces us to consider the projective space to the cohomology groups in order to obtain an independent of the generators description, see corollary 
  \ref{corProj}. 
 The definition of the additive polynomials $P_i$ and the compatibility condition is given in theorems \ref{thecomp} and  \ref{kernelCoho}.

The statement of the above theorem requires the selection of elements 
$\bar{f}_0,\ldots,\bar{f}_s$ of corresponding pole orders $\bar{m}_i$ generating the Weierstrass semigroup at the unique ramification point $P$.  The following corollary gives a description  independent of such a selection. 

\begin{corollary}
\label{corProj}
The HKG-cover $X$ can be completely described in terms of classes 
\[
[\bar{C}_i] \in \mathbb{P} 
H^1
\big(
\mathrm{Gal}(F_{i+1}/F_1),L(\bar{m}_iP)
\big)
\]
satisfying the compatibility conditions 
\[
[P_i] ([\bar{C}_i])=0 \in \mathbb{P}
H^1 \big(
\mathrm{Gal}(F_{i+1}/F_1),L(\bar{m}_iP)
\big),
\]
where $[P_i]$ is the map on projective spaces induced by the additive polynomials $P_i$. 
Once a selection of elements $\bar{f}_0,\ldots,\bar{f}_s$ is made all actions can be  expressed in terms of this expression. 
\end{corollary}

We will now indicate how we can construct the HKG-cover from the information of the compatible classes 
 $\bar{C}_i \in H^1(G,k_{\mathbf{n},m_{i}}[\bar{f}_0,\bar{f}_1,\ldots,\bar{f}_{i-1}])$. 
We argued that the additive polynomials can be constructed from the classes $\bar{C}_i$. The compatibility equation $P_i(\bar{C}_i)=0$ gives us that the cocycle representative $\bar{C}_i$ is a coboundary, that is, there is an element $D_i \in k_{\mathbf{n},m_{i}}[\bar{f}_0,\bar{f}_1,\ldots,\bar{f}_{i-1}])$ such that $P_i\big(\bar{C}_i(\sigma)\big)=(\sigma-1)D_i$ for all $\sigma\in G$. But then the element $\bar{f}_i$ satisfies the generalized Artin-Schreier extension 
\begin{equation}
\label{genAS1}
P_i(\bar{f}_i)=D_i,
\end{equation} see section \ref{tsouk8}. This essentially means that we can construct the HKG-cover step by step, adding in each step the generator $\bar{f}_i$ satisfying eq. (\ref{genAS1}).

\end{proof}
In fact the above theorem states that all the information for the HKG-cover is inside the sequence of compatible cohomology classes. This result  is similar to the cohomological interpretation of Kummer and Artin-Schreier-Witt extensions, see  
\cite[8.9-8.11]{Jac2}, \cite[chap. VI, sec. 1-2]{NeukirchCohoNumFields}. Of course 
Kummer and Artin-Schreier-Witt  extensions are abelian, while 
the HKG-extensions are solvable. This fits well with the  Shafarevich philosophy as expressed in \cite{MR1084537}.

As an application of the above result we give the following description of  finite $p$-subgroups of the Nottingham group:
\begin{theorem}
Let $G$ be a finite $p$-subgroup of the Nottingham group. 
There are elements $\bar{f}_0,\ldots,\bar{f}_s \in k[[t]]$ acted on by $G$ in terms of the cohomology classes $\bar{C}_i$ as described in theorem \ref{mainTH1}, 
and a local uniformizer $t'=(\bar{f}_s)^{1/m}$
so that 
\[
\sigma(t')= t'(1+\bar{C}_s(\sigma) (t')^m)^{-1/m} \in k[[t']]=k[[t]], \text{ for every } \sigma\in G.
\]
\end{theorem}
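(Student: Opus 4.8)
===BEGIN PLAN===

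\medskip
\noindent\textbf{Proof proposal.}
The plan is to exploit the explicit description of HKG-covers furnished by Theorem~\ref{mainTH1}, applied to the group $G$ viewed simultaneously as a finite $p$-subgroup of the Nottingham group $\mathcal{N}=\{t(1+a_1t+a_2t^2+\cdots)\}\subset\Aut(k[[t]])$ and, via the Harbater--Katz--Gabber correspondence, as the wild inertia group of an HKG-cover $X\to\mathbb{P}^1$ totally ramified over a single point $P$. First I would recall that the completed local ring at $P$ is $k[[t]]$ and that $\mathbf{A}=\bigcup_\nu L(\nu P)$ is dense in it for the $P$-adic topology; the functions $\bar f_0,\ldots,\bar f_s$ of Theorem~\ref{mainTH1} generating the Weierstrass semigroup $H(P)$ therefore live inside $k[[t]]$, and the action of $\sigma\in G$ on them is governed by the cocycles $\bar C_i$. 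The key point is that the top generator $\bar f_s$ has pole order $m=\bar m_s$ (the multiplicity $m$ being, in the Nottingham setting, the first positive element of the Weierstrass semigroup equal to the smallest $m$ with the stated property), so $\bar f_s$ has a pole of order exactly $m$ at $P$ and hence $(\bar f_s)^{1/m}$ is, up to a unit, a uniformizer; one sets $t'=(\bar f_s)^{1/m}$ and checks it generates the maximal ideal, giving $k[[t']]=k[[t]]$.

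\medskip
\noindent
Next I would compute $\sigma(t')$ directly from $\sigma(\bar f_s)$. By the cocycle description at level $s$, for every $\sigma\in G$ we have an identity of the form $\sigma(\bar f_s)=\bar f_s+\bar C_s(\sigma)$ modulo lower-order terms, and more precisely, because $F_{s+1}/F_s$ is the elementary abelian (generalized Artin--Schreier) extension cut out by $P_s(\bar f_s)=D_s$, the generator transforms by $\sigma(\bar f_s)=\bar f_s+(\text{element of }L(\bar m_sP))$; tracking the leading term at $P$ one gets $\sigma(\bar f_s)=\bar f_s\bigl(1+\bar C_s(\sigma)(t')^m+\text{higher order}\bigr)$ after normalizing $\bar f_s=(t')^{-m}$ as the leading term of its Laurent expansion. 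Raising to the power $1/m$ and using $t'=(\bar f_s)^{1/m}$ formally yields
\[
\sigma(t')=t'\bigl(1+\bar C_s(\sigma)(t')^m\bigr)^{-1/m},
\]
where the binomial series $(1+x)^{-1/m}\in k[[x]]$ makes sense since $p\nmid m$ (here $p\geq 5$ and $m$ is prime to $p$ as it is the order of the tame part, or more directly the first Weierstrass gap structure forces $p\nmid m$). The main work is to show that the ``higher order'' corrections vanish on the nose rather than merely up to higher-order terms --- i.e.\ that the naive leading-term computation is in fact exact.

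\medskip
\noindent
To close that gap I would argue as follows. Both sides of the claimed identity are power series in $t'$ fixed under nothing but checked against the group law: define $g_\sigma(t')=t'(1+\bar C_s(\sigma)(t')^m)^{-1/m}$ and $h_\sigma(t')=\sigma(t')$. Both define elements of $\Aut(k[[t']])$, and one verifies that $\sigma\mapsto g_\sigma$ is a homomorphism into $\Aut(k[[t']])$ precisely because $\bar C_s$ is a cocycle for the $G$-action on $L(\bar m_sP)$ and $P_s(\bar C_s)=0$ (the compatibility equation~\eqref{compat-eq}); this is where Theorem~\ref{mainTH1} is used in full. Since $g_\sigma$ and $h_\sigma$ agree modulo $(t')^{m+2}$ by the leading-term computation, and since a nontrivial element of the Nottingham group is determined by finitely many coefficients once the depth (ramification jump) is bounded --- more precisely, two automorphisms of $k[[t']]$ that agree to sufficiently high order and both lie in the abelian extension $F_{s+1}/F_s$ governed by the same additive polynomial $P_s$ must coincide --- one concludes $g_\sigma=h_\sigma$ for all $\sigma$. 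The main obstacle, as flagged above, is precisely this last uniqueness/rigidity step: making rigorous that matching leading terms together with the algebraic constraint $P_s(\bar f_s)=D_s$ pins down $\sigma(t')$ exactly, rather than leaving room for higher-order discrepancies; I expect this to follow from the explicit form of $P_s$ and the fact that $L((\bar m_s-1)P)$ contributes only strictly higher-order terms in $t'$, but it requires care with the normalization of $\bar f_s$ and with the choice of representative cocycle $\bar C_s$ within its class.

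===END PLAN===
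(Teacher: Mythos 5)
Your overall route is the paper's route (pass to the HKG-cover, take the top semigroup generator $\bar{f}_s$ whose pole order $m$ is the first pole number prime to $p$ as in lemma \ref{index-m}, set $t'$ to be an $m$-th root, read off $\sigma(t')$ from $\sigma(\bar{f}_s)$, and get the homomorphism property from the cocycle condition), but there is a genuine gap in how you execute the central step, and the patch you propose for it would not work. The ``higher order corrections'' you worry about are an artifact of your normalization: you only arrange $\bar{f}_s=(t')^{-m}$ ``as the leading term of its Laurent expansion.'' The paper instead makes this an exact identity: writing $\bar{f}_s=u/t^m$ with $u$ a unit, Hensel's lemma (usable precisely because $p\nmid m$ and $k$ is algebraically closed) gives an exact $m$-th root of $u$, so one may take the canonical uniformizer $t':=\bar{f}_s^{-1/m}$ with $(t')^{-m}=\bar{f}_s$ on the nose. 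Moreover $\sigma(\bar{f}_s)=\bar{f}_s+\bar{C}_s(\sigma)$ is not an identity ``modulo lower-order terms'': it is the definition of the cocycle $\bar{C}_s(\sigma)\in L\big((m-1)P\big)$. Hence
\[
\sigma(t')^{-m}=\sigma(\bar{f}_s)=\bar{f}_s+\bar{C}_s(\sigma)=(t')^{-m}\big(1+\bar{C}_s(\sigma)(t')^m\big),
\]
which gives $\sigma(t')=t'\big(1+\bar{C}_s(\sigma)(t')^m\big)^{-1/m}$ exactly, up to an $m$-th root of unity that must be $1$ because $\sigma$ acts trivially on the cotangent space (it lies in a $p$-group inside the Nottingham group and $p\nmid m$). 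No further argument is needed; the cocycle condition is then used only to check that the resulting explicit map $\Phi$ is a (mono)morphism, as in theorem \ref{explicitForm}.

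By contrast, your proposed rigidity step --- that $g_\sigma$ and $h_\sigma$ agree modulo $(t')^{m+2}$, both ``lie in'' the extension governed by $P_s$, and therefore coincide --- is not a valid principle: two automorphisms of $k[[t']]$ agreeing to any fixed finite order need not be equal, automorphisms do not ``lie in'' a field extension, and the constraint $P_s(\bar{f}_s)=D_s$ determines $\sigma(t')$ from $\sigma(\bar{f}_s)$ only up to the same $m$-th root of unity ambiguity already handled above, not via an agreement-to-high-order argument. Also two smaller inaccuracies: $p\nmid m$ has nothing to do with a ``tame part'' (the group is a $p$-group); it is lemma \ref{index-m}. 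And the homomorphism property of $\sigma\mapsto g_\sigma$ needs only the cocycle identity for $\bar{C}_s$, not the compatibility condition (\ref{compat-eq}).
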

\begin{proof}
See theorem \ref{explicit-form} and the subsequent discussion. 
\end{proof}
   The above theorem is applied as follows: We start from a local action of a finite $p$-group $G$ on
 $k[[t]]$ and we construct an HKG-cover from it. From this HKG-cover we obtain the series of generators $\bar{f}_0,\ldots,\bar{f}_s\in k[[t]][t^{-1}]$ and we define the cohomology class $\bar{C}_s$, which in turn gives an explicit form of the action of $G$ on $k[[t']]$. Essentially we describe the conjugation class of $G \subset \Aut(k[[t]])$, since the group acting on
  $k[[t]]$ by 
  \[
  \sigma(t)=t 
  (1+\bar{C}_s(\sigma) t^m)^{-1/m},
   \] 
  is conjugate to our original action.

 We now describe the structure of this article.
In section \ref{sec:HKG-covers} we will introduce the representation and ramification filtration and their relation and we will also give a description of the Riemann-Roch space 
$L(m_i P)$ as polynomials of bounded degrees. 
Then we give a  cohomological  interpretation  of the action of the group $G$ and we also see  how the polynomials of each successive abelian extension can be recovered from this construction. In section \ref{sec:NotiGroups} we apply these tools in the problem of determining finite subgroups of the Nottingham group and in particular we give explicit forms of elements of order $p^h$. 
In the cyclic group case the cohomology group can be
expressed in terms of coinvariants of group action, see proposition \ref{22prop}. 
It seems that in recent years   interest on this problem has grown, see \cite{klopsch2000automorphisms}, \cite{Bleher2017}, \cite{lubin2011torsion}, \cite{DjurreMaster}. 



{\bf Acknowledgement:} 
 The authors would like to thank the anonymous referee for her/his comments on the improvement of the article.
This research is co-financed by Greece and the European Union (European Social Fund- ESF) through the Operational Programme ``Human Resources Development, Education and Lifelong Learning'' in the context of the project “Strengthening Human Resources Research Potential via Doctorate Research” (MIS-5000432), implemented by the State Scholarships Foundation (IKY).

\section{Generalities on HKG-covers}
\label{sec:HKG-covers}
\subsection{Ramification filtration}
 Let $X\rightarrow \mathbb{P}^1$ be a HKG-cover, that is Galois cover with Galois group a $p$-group $G$ fully ramified  over one point $P\in \mathbb{P}^1$.
In the associated HKG-curve $X$, the group $G$ will coincide with the inertia group of the curve at the unique ramified point, $G_T(P)=\{\sigma\in G(P):v_p(\sigma(t)-t)\geq 1\}$, where $t$ is a local uniformizer at $P$ and $v_P$ is the corresponding valuation. 
For more information on ramification filtration the reader is referred to \cite{serre2013local}.
 We define $G_i(P)$ to be the subgroup of $\sigma\in G(P)$ that acts trivially on $\mathcal{O}_p/m_P^{i+1}$, obtaining the following filtration;
\begin{equation}
\label{ramfiltratseq}
G_T(P)=G_0(P)=G_1(P)\supseteq G_2(P)\supseteq\cdots\supseteq\{1\}.
\end{equation}
Let us call an integer $i$ a \emph{jump of the ramification filtration} if $G_i(P)\gneqq G_{i+1}(P)$ and denote by
\eq{\label{tsouk4}
G_0(P)=G_1(P)=\cdots=G_{b_1}(P)\gneqq 
G_{b_1+1}(P)=\cdots=
G_{b_2}(P)\gneqq\cdots\gneqq G_{b_\mu}(P)\gneqq\{1\}
}
the filtration of the jumps, assuming that there are exactly $\mu$ jumps.

\subsection{The Weierstrass semigroup}\label{subsectionweierstrasssemigroup}
The Weierstrass semigroup $H(P)$ is the semigroup consisting of all pole numbers, i.e. $m\in \mathbb{N}$,  such that there is 
a function $f$ on $X$ with $(f)_\infty=mP$. 
For the Weierstrass semigroup  $H(P)$ we consider all pole numbers $m_i$ forming an increasing sequence
\[
0 = m_0 < \ldots < m_{r-1} < m_r,
\]
where $m_r$ is the first pole number not divisible by the characteristic. If $g\geq 2$ and $p\geq 5$ we can prove that $m_r\leq 2g-1$, see \cite[lemma 2.1]{kontogeorgis2008ramification}.

Let $F=k(X)$ be the function field of the HKG-curve $X$.
For every $m_i$,
$0\leq i\leq r$
in the Weierstrass semigroup we denote by 
$f_i\in F$  an element of $F$ that has a unique pole at $P$ of order $m_i$, i.e. $(f_i)_\infty=m_iP$. For each $i\in\{0,\dots, r\}$ the set $\{f_0,\dots,f_i\}$ forms a basis for the Riemann-Roch space $L(m_iP)$. The spaces 
\eq{
\label{L-flag}
k=L(m_0P) \subsetneq L(m_1 P) \subsetneq \cdots 
\subsetneq L(m_r P)
}
give rise to a natural flag of vector spaces corresponding to the Weierstrass semigroup.  
Notice that if $\mu$ is a pole number in $H(P)$ we have $\mu=m_{\dim L(\mu P)-1}$.

\subsection{Representation filtration}
For each $0\leq i \leq r$ we consider the representations
\begin{equation} \label{rho-def}
\rho_i: G_1(P) \rightarrow \mathrm{GL}(L(m_iP))
\end{equation}
which give rise to a decreasing sequence of groups 
\eq{\label{rep-fil}
G_1(P)=\mathrm{ker} \rho_0 \supseteq \mathrm{ker} \rho_1 \supseteq \mathrm{ker} \rho_2 \supseteq \ldots \supseteq \mathrm{ker} \rho_r =\{1\}.
}
Recall that $r$ is the index of $m_r$, the first pole number not divisible by $p$. 
In \cite{kontogeorgis2008ramification} the first author proved that $\rho_r$ is faithful {hence the last equality $\mathrm{ker}\rho_r=\{1\}$.

 We shall call the last filtration the \emph{representation filtration of $G$}.

\begin{definition}
An index $i$ is called a jump of the representation filtration if and only if $\mathrm{ker}\rho_i \gneqq \mathrm{ker} \rho_{i+1}$. 
\end{definition}

We will denote the jumps in the representation filtration by
\[
c_1 < c_2 < \ldots < c_{n-1} < c_n =r-1, 
\]
that is
\[
\mathrm{ker} \rho_{c_i} > \mathrm{ker}\rho_{c_i+1}. 
\]
The last equality $c_n=r-1$ is proved in \cite[rem. 9]{Karanikolopoulos2013}.
We have now a sequence of decreasing groups 
\eq{\label{tsouk6}
G_1(P) =\mathrm{ker} \rho_0 = \ldots = \mathrm{ker}\rho_{c_1} > \ldots \mathrm{ker} \rho_{c_{n-1}}> \mathrm{ker} \rho_{c_n} > \{1\}
}
which gives rise to the following sequence of extensions;
\eq{\label{tsouk3}
F^{G_1(P)}=F^{\Ker\rho_{c_1}}\subset F^{\Ker\rho_{c_2}}\subset\dots\subset F^{\Ker\rho_{c_n}}\subset F.
}




\subsection{A relation of the two filtrations in the case of HKG-covers}

In \cite{Karanikolopoulos2013} Karanikolopoulos and the first author related the filtrations defined in eq. (\ref{tsouk4}), (\ref{tsouk6}) and the Weierstrass semigroup in the following way;

\begin{theorem}\label{tsouk1}
We distinguish the following two cases:

\noindent$\bullet$
 If $G_1(P)> G_2(P)$ then the Weierstrass semigroup is minimally generated by 
$m_{c_i+1}=p^{h_i} \lambda_i$, $(\lambda_i,p)=1$,
$1\leq i \leq n$
  and the cover $F/F^{G_2(P)}$ is an HKG-cover as well. In this case $|G_2(P)|=m_1$.

\noindent$\bullet$
If $G_1(P)=G_2(P)$ then the Weierstrass semigroup is minimally generated
by $m_{c_i+1}=p^{h_i} \lambda_i$, $(\lambda,p)=1$, 
$1\leq i \leq n$
and by an extra generator  $p^h=|G_1(P)|$, which is different by all $m_{c_i+1}$ for all $1\leq i \leq n$. 

Especially when $X\to\mathbb{P}^1$ is an HKG-cover, the number of ramification jumps $\mu$ coincides with the number of representation jumps $n$, i.e. $n=\mu$.  The integers $\lambda_i$, which appear as factors of the integers $m_{c_i+1}$, $1\leq i \leq n$ are the jumps of the ramification filtration, i.e. $
\lambda_i=b_i$ and 
$G_{b_i}=\Ker\rho_{c_i}$
 for $2\leq i\leq n$.  Summing up we have the following options for the ramification filtration
\[
G_1(P)=\cdots = G_{\lambda_1} \gneqq 
G_{\lambda_1+1} = \cdots =G_{\lambda_2} \gneqq
G_{\lambda_2+1} = \cdots =
G_{\lambda_n} \gneqq \{1\}
\]
or
\[
G_1(P)>  G_2(P) = \cdots =G_{\lambda_1} \gneqq 
G_{\lambda_1+1} = \cdots= G_{\lambda_2} \gneqq
G_{\lambda_2+1} = \cdots =
G_{\lambda_n} \gneqq \{1\}
\]
\end{theorem}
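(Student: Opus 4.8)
The plan is to prove Theorem~\ref{tsouk1} by analyzing the interplay between the representation filtration \eqref{tsouk6} and the ramification filtration \eqref{tsouk4}, using the fact that for an HKG-cover the unique wildly ramified point $P$ sits above a totally ramified point of $\mathbb{P}^1$. The first step is to recall the classical formula relating pole orders at $P$ to the valuation of $\sigma(t)-t$: for $\sigma \in G_1(P)$ and a function $f$ with $(f)_\infty = mP$ with $p \nmid m$, one has $v_P(\sigma(f) - f) = v_P(f) + i_G(\sigma)$ where $i_G(\sigma)$ is the ramification-filtration index of $\sigma$, i.e.\ $\sigma \in G_j(P) \setminus G_{j+1}(P)$ iff $i_G(\sigma) = j$. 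This immediately shows that $\sigma$ acts trivially on $L(m_iP)$ precisely when $i_G(\sigma)$ is large enough relative to the pole orders $m_0 < \cdots < m_i$, which is the bridge between the two filtrations: $\Ker \rho_i$ is the ramification subgroup $G_{j}(P)$ for the appropriate $j$ depending on the gap structure of the semigroup.

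Next I would make this correspondence precise. The key observation is that a new pole number $m$ with $p\nmid m$ appears exactly when the action on Riemann-Roch spaces becomes more refined, because then the corresponding function $f$ with $(f)_\infty = mP$ generates a new abelian subextension. Using the genus/ramification data of the HKG-cover (Riemann-Hurwitz, plus the structure theorem that $F/F^{G_2(P)}$ is again HKG when $G_1 > G_2$) together with \cite[rem.~9]{Karanikolopoulos2013} which gives $c_n = r-1$, I would identify each representation jump $c_i$ with the index such that $m_{c_i+1}$ is a new minimal generator of $H(P)$, and simultaneously match $\Ker\rho_{c_i}$ with $G_{b_i}(P)$. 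The factorization $m_{c_i+1} = p^{h_i}\lambda_i$ with $(\lambda_i,p)=1$ comes from writing the pole number and peeling off the $p$-part; the content is that the prime-to-$p$ part $\lambda_i$ equals the ramification jump $b_i$, which follows from the valuation formula above applied to $\bar{f}_i$: the extension $F_{i+1}/F_i$ is an elementary abelian $p$-extension whose conductor/jump is governed exactly by $\lambda_i$.

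The case distinction $G_1(P) > G_2(P)$ versus $G_1(P) = G_2(P)$ is then handled by examining whether the first jump of the ramification filtration occurs at $1$ or later. When $G_1 = G_2$, there is an extra generator: the order $p^h = |G_1(P)|$ itself appears as a pole number (this is the statement that the function realizing the bottom of the filtration has pole order equal to the group order, as in the cyclic totally ramified case), and one checks it is distinct from all $m_{c_i+1}$ by comparing $p$-adic valuations and using minimality of the generating set. When $G_1 > G_2$, one instead gets $|G_2(P)| = m_1$ by the same reasoning applied to the HKG-subcover $F/F^{G_2(P)}$. Finally, the equality $n = \mu$ of the number of representation jumps and ramification jumps follows because the correspondence $c_i \leftrightarrow b_i$ is a bijection: each refinement of one filtration forces a refinement of the other, by the valuation formula in both directions.

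The main obstacle I expect is the precise bookkeeping in matching $\Ker\rho_{c_i} = G_{b_i}(P)$ and proving $\lambda_i = b_i$ — in particular verifying that no ``extra'' representation jumps occur between consecutive ramification jumps (the faithfulness of $\rho_r$ from \cite{kontogeorgis2008ramification} and the bound $m_r \leq 2g-1$ are needed to close this off), and separately, carefully handling the pole number $p^h = |G_1(P)|$ in the case $G_1 = G_2$, since there $p \mid p^h$ and so this generator does not fit the pattern $m_{c_i+1}$ and must be tracked by a genus computation rather than by the action on Riemann-Roch spaces directly.
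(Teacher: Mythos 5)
First, note that the paper does not prove Theorem~\ref{tsouk1} at all: its ``proof'' is the citation \cite[th.~13, th.~14]{Karanikolopoulos2013}, so this is a recalled external result and there is no internal argument for your sketch to be compared with. Judged on its own, your outline identifies the right objects (the two filtrations, the bridge via valuations of $\sigma(f)-f$), but it has a genuine gap at its central step. The formula $v_P(\sigma(f)-f)=v_P(f)+i_G(\sigma)$ is only valid when $p\nmid v_P(f)$: writing $f=ut^{-m}$ and $\sigma(t)=t(1+ct^{j}+\cdots)$, the leading coefficient of $\sigma(f)-f$ is $-mc$, which vanishes when $p\mid m$. But almost all of the minimal generators in the statement, $m_{c_i+1}=p^{h_i}\lambda_i$ with $h_i>0$, \emph{are} divisible by $p$ (only the last one is prime to $p$, cf.\ Lemma~\ref{index-m}). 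So the bridge you build gives no direct control of $v_P(\sigma(\bar f_i)-\bar f_i)$ for these generators, and the key identifications $\lambda_i=b_i$, $\Ker\rho_{c_i}=G_{b_i}(P)$, and ``representation jump $\Leftrightarrow$ new semigroup generator'' are asserted (``I would identify\ldots'') rather than derived. In \cite{Karanikolopoulos2013} these come from a genuinely different mechanism: passing to the quotient curves $X/\Ker\rho_{c_i}$, showing the subcovers are again HKG, and transporting ramification data through the tower (transitivity of the different, Weierstrass semigroups of quotient curves), not from the single-curve valuation formula.

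Two further circularity/omission issues: you invoke ``the structure theorem that $F/F^{G_2(P)}$ is again HKG when $G_1>G_2$'' as an ingredient, but that statement is itself one of the assertions of the theorem you are proving; and in the case $G_1(P)=G_2(P)$ the appearance of the extra generator $p^h=|G_1(P)|$ (and its distinctness from all $m_{c_i+1}$, as well as $|G_2(P)|=m_1$ in the other case) is only gestured at via ``a genus computation'' without an actual argument --- this is precisely where the dichotomy between the two bullet points is decided, e.g.\ via the norm/minimal polynomial of a uniformizer over $F^{G_1(P)}$ or an explicit Riemann--Roch count, none of which appears in the sketch. As it stands the proposal is a reasonable roadmap but not a proof: the step that would fail if executed literally is deducing $\lambda_i=b_i$ for the $p$-divisible generators from the valuation formula.
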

\begin{proof}
See \cite[th. 13,th. 14]{Karanikolopoulos2013}.
\end{proof}
\begin{remark}
The reader should notice that $\Ker\rho_{c_1}=\Ker\rho_0=G_1(P)=G_{b_1}(P)$ by definition, hence $G_{b_i}=\Ker\rho_{c_i}$ for every $i\in\{1,\dots,n=\mu\}$.
\end{remark}
Theorem \ref{tsouk1} allows us to use the well known fact that the quotients $G_{b_i}/G_{b_{i+1}}$ are elementary abelian $p$-groups, hence the quotients $\Ker\rho_{c_i}/\Ker\rho_{c_{i+1}}$ are elementary abelian too, and the corresponding sequence of fields in (\ref{tsouk3}) is in fact, a sequence of elementary abelian $p$-group extensions.

In \cite[prop. 27]{Karanikolopoulos2013} the first author and 
S. Karanikolopoulos observed that for a $\sigma\in \Ker\rho_{c_i}-\Ker\rho_{c_{i+1}}$ the following hold;
\[\sigma(f_\nu)=f_\nu\text{ for all }\nu\leq c_i
\]
\[\sigma(f_{c_i+1})=f_{c_i+1}+C(\sigma)\text{ for some }C(\sigma)\in k^*.
\]
They also proved (prop. 20 \& rem. 21) that for each $i\in\{1,\dots,n\}$ we have $F^{\ker\rho_{c_{i+1}}}=F^{\ker\rho_{c_i}}(f_{c_i+1})$. 

In order to simplify notation we set $F_i:=F^{\ker\rho_{c_i}}$, $\bar{m}_i:=m_{c_i+1}$ and $\bar{f_i}:=f_{c_i+1}$, see also eq. (\ref{W-def}).

\begin{example}
 In the  Artin-Schreier extension $F=k(x)(y)$ 
where $y^p- y=x^m$ only the place $P=\infty$ is ramified with the following  ramification filtration: 
\[
\mathbb{Z}/p\mathbb{Z}=G_0=\cdots=G_m > \{1\},
\]
i.e. the first and unique ramification jump is at $m$, see \cite[prop. 3.7.8]{Stichtenothv2009}.
The representation filtration is given by 
\[
G_0=\mathrm{ker} \rho_0 =\cdots = \mathrm{ker} \rho_{m-1} > \{1\},
\]
that is, the first representation jump is at $c_1=m-1$ and $\bar{f}_1=f_{c_1+1}=y$, where $c_1=m-1$ and $c_1+1=m$. Thus $F=F_2=F_1(\bar{f_1})$, and $\bar{f}_0$ is the generator $x$ of the rational function field $k(x)$.
\end{example}

We will prove in section \ref{tsouk8} the following 
\begin{proposition}
\label{generatorsofGS}
 For a given $m \in H(P)$, in the case of HKG-covers we have
\[
L\big((m-1)P \big)=k_{\mathbf{n},m}[\bar{f}_0,\bar{f}_1,\ldots,\bar{f}_s],
\]
where
\begin{equation}
\label{W-def}
k_{\mathbf{n},m}[\bar{f}_0,\bar{f}_1,\ldots,\bar{f}_s]
=
\left\langle
\begin{array}{l}
\bar{f}_0^{a_0}\bar{f}_1^{a_1}\cdots \bar{f}_s^{a_s}: 
0 \leq a_i < p^{n_i} \text{ for all } 1\leq i \leq s,
\\
\text{ and }
\deg(\bar{f}_0^{a_0}\bar{f}_1^{a_1}\cdots \bar{f}_s^{a_s})=
\sum_{\nu=0}^s 
a_\nu \bar{m}_\nu< m
\end{array}
\right\rangle_k.
\end{equation} 
In the above equation $\deg(\bar{f}_i)$ is the pole order of $\bar{f}_i$ at $P$.
The integer $s$ is determined uniquely; it is the greatest index 
of $\bar{m}_i$ such that $\bar{m}_i< m$ holds. 
The quantity $\mathbf{n}=(n_1,\ldots,n_s) \in \mathbb{N}^s$ depends on the ramification filtration, specifically $n_i$ is the number of $\Z/p\Z$ components in each elementary abelian group $G_i/G_{i+1}$ obtained by quotients of the lower ramification filtration. 
\end{proposition}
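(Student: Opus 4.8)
The plan is to prove the two inclusions $L((m-1)P) \supseteq k_{\mathbf{n},m}[\bar f_0,\ldots,\bar f_s]$ and $L((m-1)P) \subseteq k_{\mathbf{n},m}[\bar f_0,\ldots,\bar f_s]$ separately, and then conclude by a dimension count (both sides being finite-dimensional $k$-vector spaces). The first inclusion is the easy direction: a monomial $\bar f_0^{a_0}\cdots\bar f_s^{a_s}$ has a unique pole at $P$ of order $\sum_\nu a_\nu \bar m_\nu$, and the defining condition of \eqref{W-def} forces this to be $< m$, i.e. $\le m-1$, so the monomial lies in $L((m-1)P)$; hence so does the $k$-span. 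The genuine content is in the reverse inclusion together with the sharpness of the dimension.

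First I would set up the bookkeeping. By Theorem \ref{tsouk1} the Weierstrass semigroup $H(P)$ is generated (in the $p$-group case, with the extra generator when $G_1(P)=G_2(P)$) by $\bar m_0=1$ and $\bar m_1,\ldots$, and by \cite[prop. 27, prop. 20, rem. 21]{Karanikolopoulos2013} the tower $F_1\subset F_2\subset\cdots\subset F$ satisfies $F_{i+1}=F_i(\bar f_i)$ with $[F_{i+1}:F_i]=p^{n_i}$, where $n_i$ is the number of $\Z/p\Z$-factors of $G_i/G_{i+1}$. I would then argue that every element of $L((m-1)P)$ is a $k$-linear combination of ``reduced'' monomials $\bar f_0^{a_0}\cdots\bar f_s^{a_s}$ with $0\le a_i<p^{n_i}$ for $i\ge 1$: starting from the top field $F$, any power $\bar f_s^{a_s}$ with $a_s\ge p^{n_s}$ can be rewritten, using the generalized Artin–Schreier relation $P_s(\bar f_s)=D_s$ with $D_s\in F_s$ (equivalently, $\bar f_s$ satisfies a degree-$p^{n_s}$ monic additive polynomial over $F_s$), as a combination of lower powers of $\bar f_s$ with coefficients in $F_s$; iterating downward through $F_{s-1},\ldots,F_1$ and finally using that $F_1=k(\bar f_0)$ is rational, one reduces any function to a $k$-combination of reduced monomials in $\bar f_0,\ldots,\bar f_s$. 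The key point to check is that this rewriting does not increase the pole order at $P$: since $\deg P_s(\bar f_s) = p^{n_s}\bar m_s = \deg D_s$ (the extension is ``clean'' — this is exactly what makes $\bar m_s$ a semigroup generator with $\bar m_s p^{n_s}$ the pole order of $D_s$, and what ties $n_i$ to the ramification jumps), the substitution is pole-order non-increasing, so a function in $L((m-1)P)$ stays in $L((m-1)P)$ throughout. Hence its reduced-monomial expansion involves only monomials of degree $<m$, and we get $L((m-1)P)\subseteq k_{\mathbf n,m}[\bar f_0,\ldots,\bar f_s]$, with $s$ the largest index with $\bar m_s < m$ since monomials involving $\bar f_i$ with $\bar m_i\ge m$ cannot appear.

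Finally I would pin down that the inclusion is an equality by dimension. The reduced monomials $\bar f_0^{a_0}\cdots\bar f_s^{a_s}$ with $0\le a_i<p^{n_i}$ for $i\ge1$ and distinct values of the pole order $\sum a_\nu\bar m_\nu$ are $k$-linearly independent (their pole orders at $P$ are distinct — this uses that $\bar m_0=1$ together with the size constraints $a_i<p^{n_i}$, so that the map $(a_0,\ldots,a_s)\mapsto\sum a_\nu\bar m_\nu$ is injective on the admissible range, which is a standard feature of these numerical-semigroup presentations and is implicit in \cite{Karanikolopoulos2013}). Counting those with $\sum a_\nu\bar m_\nu < m$ therefore gives $\dim_k k_{\mathbf n,m}[\bar f_0,\ldots,\bar f_s] = \#\{\mu\in H(P): \mu<m\} = \dim_k L((m-1)P)$ by Riemann–Roch / the gap structure (using $\dim L(\mu P)=\#\{$pole numbers $\le\mu\}$ recorded after \eqref{L-flag}). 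Combined with the inclusion, this forces equality. I expect the main obstacle to be the careful verification that the downward reduction via the additive polynomials $P_i$ is pole-order non-increasing and that the admissible monomials have pairwise distinct pole orders; both rest on the precise relation between the generators $\bar m_i$, the degrees $p^{n_i}$, and the ramification jumps supplied by Theorem \ref{tsouk1}, so I would isolate those two facts as lemmas (the first of which is essentially the content of section \ref{tsouk8} on generalized Artin–Schreier extensions).
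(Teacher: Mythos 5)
Your proposal is correct and follows essentially the route the paper intends: the paper defers the proof to the machinery of Section \ref{tsouk8} (the tower $F_{i+1}=F_i(\bar{f}_i)$ with $[F_{i+1}:F_i]=p^{n_i}$ and the additive relations $P_i(\bar{f}_i)=D_i$ with $D_i\in F_i$ of pole order $p^{n_i}\bar{m}_i$), and your double inclusion plus dimension count rests on exactly these ingredients together with the semigroup structure of Theorem \ref{tsouk1}. The two facts you isolate as lemmas (admissible monomials have pairwise distinct pole orders, via the divisibility of $\bar{m}_\nu$ by decreasing powers of $p$, and that every pole number $<m$ is realized by an admissible monomial) are precisely what completes the argument, so no essential idea is missing.
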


\subsection{Groups acting on flags}
An automorphism of a curve act on all ``invariants'' of the curve including the Weierstrass semigroup of the unique ramified point. Usually this action on invariants provides useful information about the action. Unfortunately the action of the group $G$ on the semigroup $H(P)$ is trivial.
This is not the case when we move to the action to appropriate flags of vector spaces. 
More precisely we will consider flags 
of $k$-vector spaces
\[
\bar{V}: k=V_0 \subsetneq V_1  \subsetneq \cdots \subsetneq V_m \subsetneq \cdots
\]
where  $V_i=L(iP)$. 
We will say that a group  $G$ is acting on a flag $\bar{V}$, if there is a homomorphism 
\[
\rho:G \rightarrow \Aut(\bar{V}), 
\]
i.e. when $\rho(g)$ is an isomorphism such that $\rho(g) (V_i)=V_i$ for all $V_i$ in the flag. 


\begin{remark}
Since the representation $\rho_r$ is faithful it makes sense to consider the representation not on the whole flag but only up to $L(m_rP)$. The natural isomorphisms on this truncated flag are given by invertible upper triangular matrices. 
\end{remark}
  Recall that $s$ is the the greatest index of $\bar{m}_i$ such that $\bar{m}_i<m$. 
For every $1\leq i \leq s$  
and for every $1\leq j \leq r$
we have that 
\begin{align*}
\sigma(f_i) & =f_i+C_i(\sigma), \text{ where } C_i(\sigma) \in
 L \big( (m_i-1)P \big) 
 \\
\sigma(\bar{f}_i) & =\bar{f}_i+\bar{C}_i(\sigma), \text{ where } \bar{C}_i(\sigma) \in
 L \big( (\bar{m}_i-1)P \big) 
 .  
\end{align*}
Proposition \ref{generatorsofGS}, which will be proved in the next section, implies that if $\bar{f}_1,\ldots,\bar{f}_s$ are fixed, then  the values $\bar{C}_i$ for $1 \leq i \leq s$ determine the action completely. 

Also notice that for each $i\in \{1,\dots,r\}$, $f_i$ is a polynomial expression of the $\bar{f}_1,\ldots,\bar{f}_s$.
 By proposition \ref{generatorsofGS} we have $\bar{C}_i \in L \big( (\bar{m}_i-1)P \big)=
 k_{\mathbf{n},\bar{m}_i}[\bar{f}_0,\ldots,\bar{f}_{i-1}]$. 
The functions $\sigma \mapsto C_i(\sigma)$  and 
$\sigma \mapsto \bar{C}_i(\sigma)$ are 
cocycles, i.e.
\[
\bar{C}_i(\sigma\tau)=\bar{C}_i(\sigma)+ \sigma \bar{C}_i(\tau). 
\]
We plan to show that these cocycles define the action of $G$
on $X$, and in particular the finite subgroup of $\Aut(k[[t]])$.
\begin{remark}
The selection of the generators $\bar{f}_i$ for $0\leq i \leq s$ is not unique. Every element $a \in k_{\mathbf{n},m_i}[\bar{f}_0,\bar{f}_1,\ldots,\bar{f}_{i-1}]$ gives rise to a new generator $\bar{f}_i+a$. 

The new cocycle $\bar{C}_{i}'$ which is defined in terms of the generator $\bar{f}_i+a$ is given by 
\[
\sigma(\bar{f}_i+a)=\sigma(\bar{f}_i)+\sigma(a)
=\bar{f}_i +a + \bar{C}_i(\sigma)+\sigma(a)-a=
\bar{f}_i +a+
\bar{C}'_i(\sigma).
\] 
Therefore 
\[
\bar{C}'_i(\sigma)=\bar{C}_i(\sigma)+ (\sigma-1)a. 
\]
\end{remark}

Also instead of selecting the generator $\bar{f}_i$, which has pole order 
$\bar{m}_i$ at $P$ we can select $\lambda \bar{f}_i$ for any $\lambda\in k^*$. This change leads to cocycle $\lambda \bar{C}_i$. Therefore selecting the generator amounts to giving an element in the projective space
 \[
 \mathbb{P} 
 H^1\left(\frac{G}{\ker{\rho}_{i-1}},
k_{\mathbf{n},m_i}[\bar{f}_0,\bar{f}_1,\ldots,\bar{f}_{i-1}]
\right)
\]
This gives us the following
\begin{lemma}
\label{coboundaries}
The cocycles  $\bar{C}_i,\bar{C}_i'$ corresponding to 
different generators $\bar{f}_i,\bar{f}_i'$ with the same 
pole number $\bar{m}_i$, that is 
$\bar{f}'_i=\lambda \bar{f}_i +a$, $a \in k_{\mathbf{n},m_i}[\bar{f}_0,\bar{f}_1,\ldots,\bar{f}_{i-1}]$
satisfy the relation
\[
\bar{C}_i'(\sigma)=\lambda \bar{C}_i(\sigma) + (\sigma-1)\lambda a 
\]
and a generator free description of the action is determined by a series of classes $\tilde{C}_i$ in 
\begin{equation}
\label{cocycles-def}
\xymatrix{
 H^1
\left(
\frac{G}{\ker{\rho}_{i-1}},
k_{\mathbf{n},m_i}[\bar{f}_0,\bar{f}_1,\ldots,\bar{f}_{i-1}]
\right)
\ar@{^{(}->}[r]^{\qquad\mathrm{inf}} 
\ar[d]&
 H^1(G,k_{\mathbf{n},m_i}[\bar{f}_0,\bar{f}_1,\ldots,\bar{f}_{i-1}])
\ar[d] 
\\
\mathbb{P} H^1
\left(
\frac{G}{\ker{\rho}_{i-1}},
k_{\mathbf{n},m_i}[\bar{f}_0,\bar{f}_1,\ldots,\bar{f}_{i-1}]
\right)
\ar@{^{(}->}[r]^{\qquad\overline{\mathrm{inf}}}  &
\mathbb{P} H^1(G,k_{\mathbf{n},m_i}[\bar{f}_0,\bar{f}_1,\ldots,\bar{f}_{i-1}])
}
.
\end{equation}
\end{lemma}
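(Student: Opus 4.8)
The plan is to analyze directly how the cocycle $\bar C_i$ transforms under the two basic modifications of the generator $\bar f_i$, namely scaling by $\lambda \in k^*$ and translation by an element $a \in k_{\mathbf{n},m_i}[\bar f_0,\dots,\bar f_{i-1}]$, and then to package the result in cohomological language. First I would compute, as in the preceding remark, $\sigma(\lambda \bar f_i + a) = \lambda \bar f_i + a + \lambda\bar C_i(\sigma) + (\sigma-1)a$; writing this as $\bar f_i' + \bar C_i'(\sigma)$ with $\bar f_i' = \lambda\bar f_i + a$ gives $\bar C_i'(\sigma) = \lambda\bar C_i(\sigma) + (\sigma - 1)a$. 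Since $(\sigma-1)a$ is precisely a $1$-coboundary and $\lambda \bar C_i$ is again a cocycle (the module being a $k$-vector space on which $G$ acts $k$-linearly, so scalar multiples of cocycles are cocycles), the classes $[\bar C_i]$ and $[\bar C_i']$ agree in $H^1$ up to the scalar $\lambda$. I would also note that we may absorb $\lambda$ into $a$ as in the statement, writing $\bar f_i' = \lambda\bar f_i + a$ with the coboundary term $(\sigma-1)\lambda a$, which is the form requested; this is a cosmetic rewriting and costs nothing.

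Next I would justify that the cocycle $\sigma \mapsto \bar C_i(\sigma)$ actually factors through $G/\ker\rho_{i-1}$. This uses Proposition \ref{generatorsofGS}, by which $\bar C_i(\sigma) \in L((\bar m_i - 1)P) = k_{\mathbf{n},\bar m_i}[\bar f_0,\dots,\bar f_{i-1}]$, together with the observation recorded after Theorem \ref{tsouk1} that elements of $\ker\rho_{c_i}$ fix $f_0,\dots,f_{c_i}$ pointwise — in the renamed notation, elements of $\ker\rho_{i-1}$ act trivially on $\bar f_0,\dots,\bar f_{i-1}$ and hence on the whole module $k_{\mathbf{n},m_i}[\bar f_0,\dots,\bar f_{i-1}]$, which is spanned by monomials in those functions. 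Thus $\ker\rho_{i-1}$ acts trivially on the coefficient module, so the inflation map $\mathrm{inf}$ in the diagram is defined and, by the inflation–restriction exact sequence, injective. Moreover $\bar C_i$, restricted to $\ker\rho_{i-1}$, is a homomorphism into the module; one checks it is in fact zero there because $\sigma(\bar f_i) = \bar f_i$ for $\sigma \in \ker\rho_{i-1}$ (again from the cited Prop. 27 of \cite{Karanikolopoulos2013}, since $\bar m_i = m_{c_i+1}$ is not in the span of the smaller generators — or more simply because $\ker\rho_{i-1} \supseteq \ker\rho_i$ is exactly the subgroup fixing $L(m_{c_i+1}P) \ni \bar f_i$ up to... ), hence $\bar C_i$ lifts uniquely to a class in $H^1(G/\ker\rho_{i-1}, -)$, denoted $\tilde C_i$. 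I would then verify that the square in \eqref{cocycles-def} commutes: the left and right vertical maps are the projectivization quotients by the $k^*$-scaling action, and the bottom map $\overline{\mathrm{inf}}$ is the induced map on projective spaces; commutativity is immediate since projectivization is functorial and inflation is $k$-linear.

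Finally I would assemble these observations into the statement: the transformation formula $\bar C_i'(\sigma) = \lambda\bar C_i(\sigma) + (\sigma-1)\lambda a$ is the displayed identity, and it shows that the well-defined invariant attached to the $i$-th layer of the HKG-cover, independent of the choice of generator $\bar f_i$, is exactly the image of $\tilde C_i$ in $\mathbb{P} H^1(G/\ker\rho_{i-1}, k_{\mathbf{n},m_i}[\bar f_0,\dots,\bar f_{i-1}])$, which embeds via $\overline{\mathrm{inf}}$ into $\mathbb{P} H^1(G, k_{\mathbf{n},m_i}[\bar f_0,\dots,\bar f_{i-1}])$. The main obstacle, and the only point requiring genuine care rather than bookkeeping, is the claim that $\ker\rho_{i-1}$ acts trivially on the coefficient module $k_{\mathbf{n},m_i}[\bar f_0,\dots,\bar f_{i-1}]$ so that inflation is legitimate: this rests on correctly matching the renamed indices $\ker\rho_{i-1} = \Ker\rho_{c_i}$ with the fixed-function statements of \cite[prop. 27]{Karanikolopoulos2013} and on Proposition \ref{generatorsofGS} identifying the module with the span of monomials in $\bar f_0,\dots,\bar f_{i-1}$ only. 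Everything else is a direct computation with the cocycle relation $\bar C_i(\sigma\tau) = \bar C_i(\sigma) + \sigma\bar C_i(\tau)$ and the definition of $H^1$.
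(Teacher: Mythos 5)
Your computation of the transformation rule is correct and is exactly the argument the paper itself gives (in the remark immediately preceding the lemma, which is all the justification the paper supplies): $\sigma(\lambda\bar f_i+a)=\lambda\bar f_i+a+\lambda\bar C_i(\sigma)+(\sigma-1)a$, the translation term is a coboundary, scaling multiplies the cocycle by $\lambda$, and absorbing $\lambda$ into $a$ to match the displayed form $(\sigma-1)\lambda a$ is indeed harmless. So the first half of your proposal coincides with the paper's route, and the conclusion that only the projective cohomology class is independent of the choice of generator is the intended content.

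The problem is in the step you yourself single out as the one "requiring genuine care": the justification that $\bar C_i$ is inflated from the quotient. You identify $\ker\rho_{i-1}$ with $\Ker\rho_{c_i}$ and then assert that $\bar C_i$ vanishes on this subgroup because $\sigma(\bar f_i)=\bar f_i$ there; but \cite[prop. 27]{Karanikolopoulos2013}, quoted after Theorem \ref{tsouk1}, says the opposite: an element $\sigma\in\Ker\rho_{c_i}\setminus\Ker\rho_{c_{i+1}}$ satisfies $\sigma(f_{c_i+1})=f_{c_i+1}+C(\sigma)$ with $C(\sigma)\in k^*$, i.e.\ it moves $\bar f_i=f_{c_i+1}$ by a nonzero constant. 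Since $\Ker\rho_{c_i}$ acts trivially on the coefficient module $k_{\mathbf{n},\bar m_i}[\bar f_0,\ldots,\bar f_{i-1}]$, the restriction of $\bar C_i$ to $\Ker\rho_{c_i}$ is a nonzero homomorphism, so the class does not die under restriction and therefore is \emph{not} in the image of inflation from $G/\Ker\rho_{c_i}$; trivial action on the coefficients makes $H^1$ of the quotient well defined but does not by itself make the cocycle factor. The subgroup on which $\bar C_i$ does vanish (because it fixes both $\bar f_i$ and the coefficient module) is $\Ker\rho_{c_{i+1}}=\mathrm{Gal}(F/F_{i+1})$, so the cocycle is inflated from $\mathrm{Gal}(F_{i+1}/F_1)$ --- the group actually used for $\bar C_i$ in Theorems \ref{mainTH1} and \ref{thecomp}. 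Your trailing parenthetical ("up to\ldots") breaks off precisely at this point, and as written the lifting claim fails; with the quotient taken by $\Ker\rho_{c_{i+1}}$ instead, the remainder of your argument (injectivity of inflation via inflation--restriction, functoriality of projectivization) goes through.
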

These cocycles satisfy certain conditions which will be given in eq. (\ref{tsouk7}) and theorem \ref{kernelCoho}. 
The monomorphism $\mathrm{inf}$ is the inflation map in group cohomology, see \cite[II.2-3, p. 64]{Weiss}, while 
$\overline{\mathrm{inf}}[C]$ of the projective class $[C]$ of the cocycle $C$ is given by 
\[
\overline{\mathrm{inf}}[C]=[\mathrm{inf}(C)].
\] 
.
\begin{remark}
The vector space $k_{\mathbf{n},m_i}[\bar{f}_0,\bar{f}_1,\ldots,\bar{f}_{i-1}]$ has as base the space of monomials
$\bar{f}_0^{\nu_0} \bar{f}_1^{\nu_1}\ldots \bar{f}_{i-1}^{\nu_{i-1}}$, 
of degree smaller than $m$, where
$\nu_i < p^{n_i}$. The action on them can be described in terms of the binomial theorem, i.e. 
\begin{equation}
\label{exp-cf}
\bar{f}_0^{\nu_0} \bar{f}_1^{\nu_1}\!\cdots\! \bar{f}_{i-1}^{\nu_{i-1}}
\stackrel{\sigma}{\longrightarrow}
\bar{f}_0^{\nu_0}
\sum_{\mu_1}^{\nu_1}
\!\cdots\!
\sum_{\mu_{i-1}}^{\nu_{i-1}}
\binom{\mu_1}{\nu_1}
\!\cdots\!
\binom{\mu_{i-1}}{\nu_{i-1}}
\bar{f}_1^{\mu_1} \!\cdots\! \bar{f}_{i-1}^{\mu_{i-1}}
\bar{C}_1^{\nu_1-\mu_1} \!\cdots\! \bar{C}_{i-1}^{\nu_{i-1}-\mu_{i-1}}.
\end{equation}
\end{remark}

\subsection{Describing an HKG-cover as a sequence of Artin-Schreier extensions}\label{tsouk8}

It is known, see \cite{Garcia1991}, that every elementary abelian field extension $L/K$, with Galois group $(\Z/p\Z)^n$, is given as an Artin-Schreier extension of the form 
\[
L=K(y): \qquad y^{p^n}-y =b, \; b\in K.
\]
In our case, the elementary abelian field extension $F_{i+1}/F_i$ can be generated by an element $y\in F_{i+1}$ but this element might not be the semigroup generator $\B{f_i}$. 
We can give a description of the Artin-Schreier extension $F_{i+1}/F_i$ using a monic polynomial 
\[
A_i(X)=X^{p^{n_i}} + a_{n_i-1} X^{p^{n_i-1}} + \cdots + a_1 X^{p}+ a_0X -D_i,
\]
which can be computed in terms of the Moore determinant \cite{Goss1996}. Notice that this polynomial is an additive polynomial minus a constant term. 
Let $\{\sigma_1,\ldots,\sigma_{n_i}\}$ be a basis of the Galois group $\mathrm{Gal}(F_{i+1}/F_i)\cong (\Z/p\Z)^{n_i}$, seen as an $\mathbb{F}_p$-vector space, and let $w_1,\dots, w_{n_i}$ be elements of $k^*$ such that $\sigma_j(\bar{f_i})=\B{f}_i+w_j$. Let $W$ be the
 $\mathbb{F}_p$-subspace 
of $k$ spanned by the $w_j$, $j=1,\dots,n_i$.
We have $\dim_{\mathbb{F}_p} W=n_i$.


Let $P_{i}(X)=\prod_{a\in W}(X-a)$. Since every $w_i$ is an element of $k$, $\Gal(F_{i+1}/F_i)$ acts trivially on $P_i(X)$ and we consider the polynomial
$$A_i(X):=P_i(X)-P_i(\B{f_i}).$$
Notice that, for a $\sigma\in \mathrm{Gal}(F_{i+1}/F_i)$, we can write
$\sigma=\sigma_1^{\nu_1}\circ\cdots\circ\sigma_{n_i}^{\nu_{n_i}}$
and
\[
\sigma(\B{f_i}+a)=\B{f_i}+\nu_1w_1+\dots+\nu_{n_i}w_{n_i}+a, 
\text{ for all } a\in W\subset k.
\]  
This means that $P_i(\B{f_i})$ is $\Gal(F_{i+1}/F_i)$ invariant, i.e. belongs to $F_i$. Therefore, the polynomial $A_i(X)$ belongs to $F_i[X]$, is monic of degree $p^{n_i}=[F_{i+1}:F_i]$ and vanishes at $\B{f_i}$ hence it is the irreducible polynomial of $\B{f_i}$ over $F_i$.
The polynomial $P_i(X)$ is given by 
\begin{equation}
\label{moore-quot}
P_i(X)=
\frac{
\Delta(w_1,w_2,\ldots,w_{n_i},X)}
{ \Delta(w_1,w_2,\ldots,w_{n_i})
},
\end{equation}
where $\Delta(w_1,\dots,w_n)$ is the Moore determinant;
\[
\Delta(w_1,\dots,w_n)=
\det \begin{bmatrix}
w_1 & w_2 & \dots & w_n\\
w_1^p & w_2^p &\dots & w_n^p\\
\vdots &\vdots & & \vdots\\
w_1^{p^{n_i-1}} & w_2^{p^{n_i-1}} &\dots & w_{n_i}^{p^{n_i-1}}
\end{bmatrix}.
\]
It is an additive polynomial of the form
$$P_i(X)=X^{p^{n_i}}+a_{n_i-1}X^{p^{n_i-1}}+\dots+a_1X^p+a_0X,$$
where $a_i\in k\subset F_i$.
We have proved that the generator $\B{f_i}$ of the extension $F_{i+1}/F_i$ satisfies an equation of the form
\eq{\label{AS-rel}\B{f_i}^{p^{n_i}}+a_{n_i-1}\B{f_i}^{p^{n_i-1}}+\dots+a_1\B{f_i}^p+a_0\B{f_i}=D_i,}
for some $a_{n_i-1},\dots, a_0\in k$, $D_i=P_i(\B{f}_i)\in F_i$.
\begin{remark}
\label{remLmult}
Instead of $\bar{f}_i$ we can  use $\lambda \bar{f}_i$.  The additive polynomial corresponding to $\lambda \bar{f}_i$ is equal to $\lambda^{p^{n_i-1}} P_i(X)$, where $P_i(X)$ is the additive polynomial corresponding to $\bar{f}_i$. Indeed, when we change  $\bar{f}_i$ to $\lambda \bar{f}_i$  the $\mathbb{F}_p$-vector space $W$ is changed to $\lambda \cdot W$, that is the basis elements $w_i$ are changed to $\lambda w_i$. Hence, the  Moore determinant in the numerator of  eq. (\ref{moore-quot}) defining $P_i(\lambda X)$ is multiplied by
 $\lambda^{1+p+\cdots+p^{n_i-1}}$ while 
the denominator is multiplied by  $\lambda^{1+p+\cdots+p^{n_i-2}}$. Therefore $P_i(\lambda X)= \lambda^{p^{n_i-1}} P_i(X)$ follows.
\end{remark}
We have the  following:
\begin{theorem}
\label{thecomp}
The cocycles 
$\bar{C}_i \in H^1(\mathrm{Gal}(F_{i+1}/F_1),k_{\mathbf{n},
\bar{m}_{i}}[\bar{f}_0,\bar{f}_1,\ldots,\bar{f}_{i-1}])$, when restricted to the elementary abelian group
$\mathrm{Gal}(F_{i+1}/F_{i}) < \mathrm{Gal}(F_{i+1}/F_1)$
 describe fully 
the elementary abelian extension $F_{i+1}/F_i$ given by the equation 
\[
P_i(Y)=D_i.
\]
Moreover the element $D_i=P_i(\bar{f}_{i})$ is described by the additive polynomial $P_i(Y)$ 
and by the selection of $\bar{f}_i$. A different selection of $\bar{f}_i'$, i.e. $\bar{f}_i'=\lambda\bar{f}_i+a$, for some
 $a\in k_{\mathbf{n},m_{i}}[\bar{f}_0,\bar{f}_1,\ldots,\bar{f}_{i-1}]$,
$\lambda \in k^*$
 gives rise to the same polynomial $\lambda^{p^{n_i-1}}P_i$ and to a different $D_i'$
 given by $D_i'= \lambda^{p^{n_i-1}}D_i+ \lambda^{p^{n_i-1}}P_i(a)$. The two extensions $F_i(\bar{f}_i)$ and $F_i(\bar{f}_i')$ are equal. 
\end{theorem}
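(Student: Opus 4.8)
The plan is to establish the three assertions of Theorem \ref{thecomp} in turn, building on the explicit Artin–Schreier description of $F_{i+1}/F_i$ developed just above the statement. First I would observe that the restriction of the cocycle $\bar C_i$ to $\mathrm{Gal}(F_{i+1}/F_i)$ takes values in $k^*$ (the one-dimensional space $L(\bar m_iP)/L((\bar m_i-1)P)$ is scaled, but the relevant values $w_j=\sigma_j(\bar f_i)-\bar f_i$ lie in $k$), so that $\bar C_i\big|_{\mathrm{Gal}(F_{i+1}/F_i)}$ is precisely the function $\sigma\mapsto \nu_1w_1+\cdots+\nu_{n_i}w_{n_i}$ when $\sigma=\sigma_1^{\nu_1}\cdots\sigma_{n_i}^{\nu_{n_i}}$. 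Thus its image is exactly the $\mathbb F_p$-subspace $W\subset k$ spanned by the $w_j$, and by the Moore-determinant computation recalled in eq. (\ref{moore-quot}) the additive polynomial $P_i(Y)=\prod_{a\in W}(Y-a)$ is recovered from $W$ alone; this is the content of the first assertion. The equation $P_i(Y)=D_i$ with $D_i=P_i(\bar f_i)\in F_i$ is then, by the discussion preceding the theorem, the irreducible polynomial of $\bar f_i$ over $F_i$, so it genuinely describes the extension.

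For the second assertion I would simply invoke Remark \ref{remLmult} together with an additivity computation: replacing $\bar f_i$ by $\bar f_i'=\lambda\bar f_i+a$ replaces $w_j$ by $\lambda w_j$, hence $W$ by $\lambda W$ and $P_i$ by $\lambda^{p^{n_i-1}}P_i$ as in the remark; then
\[
D_i'=(\lambda^{p^{n_i-1}}P_i)(\bar f_i')=\lambda^{p^{n_i-1}}P_i(\lambda\bar f_i+a)
=\lambda^{p^{n_i-1}}\big(P_i(\lambda\bar f_i)+P_i(a)\big),
\]
using that $P_i$ is additive, and $P_i(\lambda\bar f_i)=\lambda^{p^{n_i-1}}\cdot\lambda^{-(p^{n_i-1}-1)}\cdot(\text{something})$ — more directly, I would keep everything in terms of the polynomial $\tilde P_i:=\lambda^{p^{n_i-1}}P_i$ attached to $\bar f_i'$, for which $\tilde P_i(\lambda\bar f_i)$ is exactly $\lambda^{p^{n_i-1}}D_i$ by the scaling in Remark \ref{remLmult}, giving $D_i'=\lambda^{p^{n_i-1}}D_i+\lambda^{p^{n_i-1}}P_i(a)$ as claimed; care with the exponents here is the one genuinely fiddly point.

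For the third assertion, that $F_i(\bar f_i)=F_i(\bar f_i')$, I would note $\bar f_i'=\lambda\bar f_i+a$ with $\lambda\in k^*\subset F_i$ and $a\in k_{\mathbf n,m_i}[\bar f_0,\ldots,\bar f_{i-1}]\subset L((\bar m_i-1)P)\subset F_i$ (the inclusion into $F_i$ holding because each $\bar f_0,\ldots,\bar f_{i-1}$ is fixed by $\mathrm{Gal}(F_{i+1}/F_i)$, by \cite[prop. 27]{Karanikolopoulos2013}); hence $\bar f_i'\in F_i(\bar f_i)$ and conversely $\bar f_i=\lambda^{-1}(\bar f_i'-a)\in F_i(\bar f_i')$, so the two fields coincide and both equal $F_{i+1}$ by the degree count $[F_{i+1}:F_i]=p^{n_i}=\deg A_i$. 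The main obstacle is purely bookkeeping: verifying that $a$ (and the coefficients of $P_i$) really lie in $F_i$ rather than merely in $F_{i+1}$, and tracking the power $p^{n_i-1}$ consistently through the Moore-determinant scaling; once the membership $a\in F_i$ and the formula $P_i(\lambda X)=\lambda^{p^{n_i-1}}P_i(X)$ are in hand, the rest is immediate from additivity of $P_i$.
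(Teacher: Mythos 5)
Your overall route is the same as the paper's: you rely on the Moore--determinant construction of section \ref{tsouk8} (the restricted cocycle has image the $\mathbb{F}_p$-space $W$, $P_i(X)=\prod_{w\in W}(X-w)$ via eq. (\ref{moore-quot}), and $P_i(Y)=D_i$ is the minimal polynomial of $\bar{f}_i$ over $F_i$), and your key observation that passing from $\bar{f}_i$ to $\bar{f}_i'=\lambda\bar{f}_i+a$ only rescales the $w_j$ to $\lambda w_j$ is exactly the paper's point that the coboundary $(\sigma-1)a$ dies on $\mathrm{Gal}(F_{i+1}/F_i)$ because $a\in F_i$ (the paper phrases this via Lemma \ref{coboundaries} and the triviality of the action on $k$). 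Your treatment of the first and third assertions is correct.

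The gap is in the second assertion, i.e. precisely the exponent bookkeeping you yourself flag as ``fiddly'': your chain of equalities is internally inconsistent and the claimed formula for $D_i'$ does not follow from it. You set $\tilde{P}_i=\lambda^{p^{n_i-1}}P_i$ and assert $\tilde{P}_i(\lambda\bar{f}_i)=\lambda^{p^{n_i-1}}D_i$ ``by Remark \ref{remLmult}''; but that remark's scaling identity, applied to your previous line $D_i'=\lambda^{p^{n_i-1}}\bigl(P_i(\lambda\bar{f}_i)+P_i(a)\bigr)$, would give $\lambda^{2p^{n_i-1}}D_i+\lambda^{p^{n_i-1}}P_i(a)$, not the stated $\lambda^{p^{n_i-1}}D_i+\lambda^{p^{n_i-1}}P_i(a)$. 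Worse, $\lambda^{p^{n_i-1}}P_i$ cannot be ``the polynomial attached to $\bar{f}_i'$'' in the sense of section \ref{tsouk8}: its root set is still $W$, not $\lambda W$, so $\tilde{P}_i(\bar{f}_i')$ is not even $\mathrm{Gal}(F_{i+1}/F_i)$-invariant and hence does not lie in $F_i$. Under the monic convention actually used in the construction, the polynomial attached to $\bar{f}_i'$ is $\prod_{w\in W}(X-\lambda w)=\lambda^{p^{n_i}}P_i(X/\lambda)$, which yields $D_i'=\lambda^{p^{n_i}}D_i+\lambda^{p^{n_i}}P_i(a/\lambda)$; already for $n_i=1$, where $P_i(X)=X^p-w_1^{p-1}X$, one sees that this differs from $\lambda^{p^{n_i-1}}P_i$ and from the stated formula. (The discrepancy traces back to the loose statement of Remark \ref{remLmult} itself, whose numerator/denominator count is off by one, but a correct proof must fix a convention and redo the computation rather than quote the remark; your argument does neither, so the one genuinely new computational claim of the theorem is left unproved.)
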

\begin{proof} 
The only part we didn't prove is the dependence of the additive polynomial to the selection of the generator $\bar{f}_i$.
 We have seen that changing $\bar{f}_i$ 
adds a coboundary to $\bar{C}_i$.  

 But when 
$\sigma$ belongs to $ \mathrm{Gal}(F_{i+1}/F_{i})$,  $\bar{C}_i(\sigma) $ belongs to $ k$, and $k$ admits the trivial action. Therefore, all coboundaries are zero and the result follows by lemma \ref{coboundaries}. 
\end{proof}

The additive polynomial $P_i(Y)$, which depends on the values of
 $\bar{C}_{i}(\sigma)$ with  $\sigma\in \mathrm{Gal}(F_{i+1}/F_{i})$ 
gives also compatibility conditions for the cocycle $\bar{C}_{i}$ on all elements of $\mathrm{Gal}(F_{i+1}/F_1)$. 
Namely, by application of $\sigma$ to eq. (\ref{AS-rel}) we obtain the following
\eq{\label{tsouk7}
\boxed{
P_i (\B{C}_i(\sigma))=(\sigma-1)D_i \text{ for all } \sigma\in \mathrm{Gal}(F_i/F_1).
}
}
So if $\sigma$ keeps $D_i$ invariant, for instance when  $\sigma \in \mathrm{Gal}(F/F_i)$, then $\B{C}_i(\sigma) \in \mathbb{F}_{p^n} \subset k$.

Equation (\ref{tsouk7}) is essentially a relation among the cocycles $\bar{C}_i(\sigma)$ and $\bar{C}_\nu(\sigma)$ for $\nu<i$. Indeed, the element $D_i \in k_{\mathbf{n},m_{i}}[\bar{f}_0,\bar{f}_1,\ldots,\bar{f}_{i-1}]$ is a polynomial expression on the elements $\bar{f}_0,\ldots,\bar{f}_{i-1}$, and the action is given in terms of the elements $\bar{C}_\nu(\sigma)$ for $\nu<i$
 and $\bar{f}_i$ as given in eq. (\ref{exp-cf}).


\begin{lemma}
\label{actA}
An additive polynomial  $P\in k[Y]$ defines a map
\begin{align}
H^1(G,k_{\mathbf{n},m_{i}}[\bar{f}_0,\bar{f}_1,\ldots,\bar{f}_{i-1}])
& \longrightarrow 
H^1(G,
k_{\mathbf{n},m_{i}}[\bar{f}_0,\bar{f}_1,\ldots,\bar{f}_{i-1}]
)
 \label{P-map}
\\
d \longmapsto P(d), \nonumber
\end{align}
\end{lemma}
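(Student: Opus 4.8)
The content of the lemma is that an \emph{additive} polynomial, in contrast to an arbitrary one, is an $\mathbb{F}_p$-linear and $G$-equivariant operation on the module $M:=k_{\mathbf{n},m_i}[\bar{f}_0,\dots,\bar{f}_{i-1}]$, and that any such operation descends to $H^1(G,-)$. So the plan is: first record these two structural properties of $P$, then carry out the (short) cocycle and coboundary bookkeeping.

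Write $P(Y)=a_dY^{p^d}+\dots+a_1Y^p+a_0Y$ with $a_j\in k$. Since $\operatorname{char} k = p$ the Frobenius is additive, whence $P(x+y)=P(x)+P(y)$ for all $x,y$ and $P(cx)=cP(x)$ for $c\in\mathbb{F}_p$; thus $P$ is $\mathbb{F}_p$-linear. Next, $G$ acts on $M$ by restriction of its action on $F=k(X)$, which is by $k$-algebra automorphisms fixing the unique ramified point, so $\sigma(x^{p^j})=\sigma(x)^{p^j}$ and $\sigma(a_j)=a_j$; hence $\sigma\circ P=P\circ\sigma$ on $M$ for every $\sigma\in G$.

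Granting this, the remainder is a one-line computation on cochains. For a $1$-cocycle $d$, set $(Pd)(\sigma):=P\big(d(\sigma)\big)$; using $d(\sigma\tau)=d(\sigma)+\sigma d(\tau)$ together with additivity and equivariance,
\[
(Pd)(\sigma\tau)=P\big(d(\sigma)+\sigma d(\tau)\big)=P\big(d(\sigma)\big)+\sigma\,P\big(d(\tau)\big),
\]
so $Pd$ is again a cocycle and $d\mapsto Pd$ is additive. If $d(\sigma)=(\sigma-1)m$ is a coboundary then $(Pd)(\sigma)=P(\sigma m)-P(m)=(\sigma-1)P(m)$ is again a coboundary. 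Therefore $P$ induces the $\mathbb{F}_p$-linear map $[d]\mapsto[Pd]$ on $H^1(G,M)$, which is the asserted map.

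The only subtlety I foresee concerns the target module: under the identification of $M$ with a Riemann--Roch space from Proposition \ref{generatorsofGS}, applying $P$ multiplies pole orders at the ramified point by $\deg P$, so a priori $Pd$ takes values in a larger Riemann--Roch space rather than in $M$ itself. This does no harm in the way the lemma is used: when $P=P_i$ and $d=\bar{C}_i$, the identity $\sigma(D_i)-D_i=P_i\big(\bar{C}_i(\sigma)\big)$ underlying eq. (\ref{tsouk7}) already exhibits $P_i(\bar{C}_i)$ as a coboundary, so its class is $0$ and lies in every group in sight --- this is precisely the compatibility relation (\ref{compat-eq}). Alternatively one reads the lemma with the target taken to be the cohomology of that larger space, the inclusion of $M$ being understood.
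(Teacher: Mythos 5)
Your proposal is correct and follows essentially the same route as the paper: the same cocycle and coboundary computation using the additivity and $G$-equivariance of $P$ (the latter because the coefficients lie in $k$ and $G$ acts by $k$-algebra automorphisms), and you even flag and resolve the same subtlety the paper addresses, namely that $P(d)$ a priori lands in a larger Riemann--Roch space, which is handled via eq.~(\ref{tsouk7}) in the case $P=P_i$, $d=\bar{C}_i$. Nothing further is needed.
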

\begin{proof}
Notice first that elements in the space $L(\nu P)$, for some $\nu\in \mathbb{N}$, can be multiplied as elements of the ring $\mathbf{A}$, so a polynomial expression $P(d)$ of a cocycle $d$ makes sense. 
One has to be careful since the multiplication of two elements in $L(\nu P)$, is not in general an element of $L(\nu P)$, since it can have a pole order greater than $\nu$. 
Therefore the value $P(d)$ is an element in $L(\mu P)$
for some $\mu \in \mathbb{N}$ for   big enough $\mu$.
However notice that  eq. (\ref{tsouk7}) implies that $P(\bar{C}_i(\sigma)) \in k_{\mathbf{n},\bar{m}_{i}}[\bar{f}_0,\bar{f}_1,\ldots,\bar{f}_{i-1}]$
so that $P_i(\bar{C}_i) \in H^1(G,k_{\mathbf{n},\bar{m}_{i}}[\bar{f}_0,\bar{f}_1,\ldots,\bar{f}_{i-1}])$.

Finally  observe now that if $d$ is a cocycle, i.e. $d(\sigma \tau)=d(\sigma)+\sigma d(\tau)$, then 
\[
P(d(\sigma \tau	))=P(d(\sigma)+\sigma d(\tau))=
P(d(\sigma))+P(\sigma d(\tau))=
P(d(\sigma))+\sigma P(d(\tau)).
\]
On the other hand if $d(\sigma)=(\sigma-1)b$ is a coboundary, 
then 
\[
P(d(\sigma))=P\big( (\sigma-1)b\big)=(\sigma-1)P(b)
\] 
is a coboundary as well.
\end{proof}
This allows us to give a cohomological interpretation of  eq. (\ref{tsouk7}):
\begin{theorem}
\label{kernelCoho}
The cocycles $\bar{C}_i$ given in eq. (\ref{cocycles-def}) are in the kernel of the map $P_i$ acting on cohomology as defined in lemma \ref{actA}. The corresponding element $D_i$ is then the 
element expressing $P(C_i)$ as a coboundary. The elementary abelian extension is determined by a series of cocycles
 $\bar{C}_i\in H^1(\mathrm{Gal}(F_{i+1}/F_i), k_{\mathbf{n},\bar{m}_i}[\bar{f}_0,\bar{f}_1,\ldots,\bar{f}_{i-1}])$, which define a series of additive polynomials $P_i$ and extend to cocycles 
 in $\bar{C}_i \in H^1(\mathrm{Gal}(F_{i+1}/F_1), k_{\mathbf{n},\bar{m}_i}[\bar{f}_0,\bar{f}_1,\ldots,\bar{f}_{i-1}])$ so that each $\bar{C}_i$ is in the kernel of $P_i$. 
\end{theorem}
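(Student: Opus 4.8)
The plan is to read the statement off equation~(\ref{tsouk7}) and Lemma~\ref{actA} for the direct part, and to reconstruct the tower~(\ref{tsouk3}) one elementary abelian layer at a time for the converse. For the direct part, apply an arbitrary $\sigma\in\mathrm{Gal}(F_{i+1}/F_1)$ to the generalized Artin--Schreier relation~(\ref{AS-rel}), $P_i(\bar f_i)=D_i$. Since $P_i$ is additive with coefficients in $k$, on which $G$ acts trivially, and $\sigma(\bar f_i)=\bar f_i+\bar C_i(\sigma)$, we get $P_i(\bar f_i)+P_i(\bar C_i(\sigma))=\sigma(D_i)$, that is $P_i(\bar C_i(\sigma))=(\sigma-1)D_i$, which is exactly~(\ref{tsouk7}). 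By Lemma~\ref{actA} the additive polynomial $P_i$ induces a map on $H^1$, and this identity says precisely that the cocycle $\sigma\mapsto P_i(\bar C_i(\sigma))$ is the coboundary $(\sigma-1)D_i$; hence $P_i(\bar C_i)=0$ in $H^1\bigl(\mathrm{Gal}(F_{i+1}/F_1),k_{\mathbf n,\bar m_i}[\bar f_0,\ldots,\bar f_{i-1}]\bigr)$ and $D_i$ is the element realizing $P_i(\bar C_i)$ as a coboundary. The only subtlety, already handled in the proof of Lemma~\ref{actA}, is that $P_i(\bar C_i(\sigma))$ a priori lies in a larger Riemann--Roch space $L(\mu P)$; but~(\ref{tsouk7}) forces it into $k_{\mathbf n,\bar m_i}[\bar f_0,\ldots,\bar f_{i-1}]$, since $(\sigma-1)D_i$ lies there.

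For the converse I would induct on $i$ along the flag~(\ref{tsouk3}), assuming that $F_i$ and the $\mathrm{Gal}(F_i/F_1)$-action on $k_{\mathbf n,\bar m_i}[\bar f_0,\ldots,\bar f_{i-1}]$ have already been produced from $\bar C_1,\ldots,\bar C_{i-1}$. Restricting the given class $\bar C_i$ to the elementary abelian group $\mathrm{Gal}(F_{i+1}/F_i)$, its values lie in $k$ with trivial action by~\cite[prop.~27]{Karanikolopoulos2013}, so the restriction is an $\mathbb F_p$-linear map whose image is an $\mathbb F_p$-subspace $W\subseteq k$ of dimension $n_i$; from $W$ one forms the additive polynomial $P_i(X)=\prod_{a\in W}(X-a)$ exactly as in the discussion preceding Theorem~\ref{thecomp}. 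The hypothesis $P_i(\bar C_i)=0$ then supplies, via Lemma~\ref{actA}, an element $D_i\in k_{\mathbf n,\bar m_i}[\bar f_0,\ldots,\bar f_{i-1}]$ with $P_i(\bar C_i(\sigma))=(\sigma-1)D_i$, and one sets $F_{i+1}:=F_i(\bar f_i)$ with $\bar f_i$ a root of $P_i(Y)=D_i$. By Garcia's description of elementary abelian extensions~\cite{Garcia1991} this is a $(\mathbb Z/p\mathbb Z)^{n_i}$-extension, and the identity $P_i(\bar C_i(\sigma))=(\sigma-1)D_i$ is exactly the requirement that the prescribed rule $\sigma(\bar f_i)=\bar f_i+\bar C_i(\sigma)$ be compatible with $P_i(\bar f_i)=D_i$; so $\bar C_i$, extended in this way from $\mathrm{Gal}(F_{i+1}/F_i)$ to $\mathrm{Gal}(F_{i+1}/F_1)$, is realized by genuine automorphisms of $F_{i+1}$. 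The action on the next module $k_{\mathbf n,\bar m_{i+1}}[\bar f_0,\ldots,\bar f_i]$ is then determined by $k$-linearity and the binomial formula~(\ref{exp-cf}), closing the induction; iterating over all layers produces the HKG-cover.

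I expect the main obstacle to be the consistency step in the converse: checking that the prescribed $\bar C_i$ really glues with the action already built on $F_i$ to give a bona fide group action on $F_{i+1}$, and that the freedom in choosing $D_i$ and the root $\bar f_i$ leaves $F_{i+1}/F_i$ unchanged up to $F_i$-isomorphism. This is precisely where Lemma~\ref{coboundaries} and Theorem~\ref{thecomp}, which describe the effect of replacing $\bar f_i$ by $\lambda\bar f_i+a$, are needed, together with the fact that $k$ carries the trivial action on the elementary abelian layer so that the relevant coboundaries vanish. The direct statement and the identification of $D_i$ are, by contrast, little more than a transcription of~(\ref{tsouk7}).
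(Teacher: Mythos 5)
Your argument is essentially the paper's own: the direct part is exactly the derivation of eq.~(\ref{tsouk7}) by applying $\sigma$ to the relation~(\ref{AS-rel}) and then invoking Lemma~\ref{actA} to read $P_i(\bar C_i(\sigma))=(\sigma-1)D_i$ as vanishing in $H^1$, and your layer-by-layer reconstruction via $W$, the Moore-determinant polynomial $P_i$, and $P_i(Y)=D_i$ matches the construction in Section~\ref{tsouk8}, Theorem~\ref{thecomp}, and the sketch given for Theorem~\ref{mainTH1}. The proposal is correct and follows the same route, including the appeal to Lemma~\ref{coboundaries} and the triviality of the action on $k$ for the consistency of the choices.
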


\begin{remark}
In remark \ref{remLmult} we have seen that by changing the generator $\bar{f}_0$ to $\lambda \bar{f}_0$ the additive polynomial is changed from $P_i$ to $\lambda^{p^{n_i-1}} P_i$. The corresponding map 
\[
\mathbb{P} 
H^1(G,k_{\mathbf{n},m_{i}}[\bar{f}_0,\bar{f}_1,\ldots,\bar{f}_{i-1}])
 \longrightarrow 
\mathbb{P}
H^1(G,
k_{\mathbf{n},m_{i}}[\bar{f}_0,\bar{f}_1,\ldots,\bar{f}_{i-1}]
)
\]
is not affected. 
\end{remark}

\section{Nottingham groups}
\label{sec:NotiGroups}
An automorphism $\sigma$ of the complete local 
 algebra
 $k[[t]]$ is determined by the image $\sigma(t)$ of $t$, 
where $\sigma(t) =\sum_{i=1}^\infty a_it^i\in k[[t]]$. We consider the subgroup of normalised automorphisms that is, automorphisms of the form
$$\sigma:t\mapsto t+\sum_{i=2}^\infty a_it^i.$$
S. Jennings \cite{jennings1954substitution} proved that the set of latter automorphisms forms a group under substitution, denoted by $\mathcal{N}(k)$, called the Nottingham group. This group has many interesting properties, for instance R. Camina proved in \cite{camina1997subgroups} that every countably based pro-$p$ group can be embedded, as a closed subgroup, in the Nottingham group. We refer the reader to \cite{camina2000nottingham} for more information regarding $\mathcal{N}(k)$. We would like to provide an explicit way to describe the elements of $\mathcal{N}(k)$.
It is proved in \cite[prop. 1.2]{klopsch2000automorphisms} and \cite[sec. 4, th. 2.2]{lubin2011torsion}, that each automorphism of order $p$ is conjugate to the automorphism given by 
\begin{equation} \label{explicit-form}
t \mapsto t (1+ct^m)^{-1/m} = 
t \left(\sum_{\nu=0}^\infty 
\binom{-1/m}{\nu} c^\nu t^{\nu m}
\right)
\end{equation}
for some $c\in k^\times$ and some positive integer $m$ prime to $p$.

In \cite{Bleher2017} F. Bleher, T. Chinburg, B. Poonen and P. Symonds, studied 
the extension $L/k(t)$, where $L:=k(\{\sigma(t): \sigma \in G \})$,  
where $G$ is a finite subgroup of $\mathrm{Aut} k[[t]]$. 
Notice here that each automorphism of order $p^n$ is conjugate to $t\mapsto \sigma(t)$, where $\sigma(t) \in k[[t]]$ is algebraic over $k(t)$.  Also in \cite{Bleher2017} 
the notion of almost rational automorphism is defined: an automorphism $\sigma \in \Aut(k[[t]])$ is called almost rational if the extension  $L/k(t)$ is  Artin-Schreier.

The rational function field $k(t)$, despite its simple form, is not natural with respect to the group $G$ acting on the HKG-cover. 
For example the determination of the algebraic extension $L/k(t)$ 
and the group of the normal closure seems very difficult. 
 
Here we plan to give another generalization, by using the fact that the ``natural'' rational function field with respect to the Katz-Gabber cover is $X^{G_1}$ and not $k(t)$. 

In \cite[p. 473]{kontogeorgis2008ramification} the first author proposed the following explicit form for an automorphism of an HKG-cover of order $p^n$;
\[
\sigma(t)=t \left(1+\sum_{i=1}^r c_i(\sigma) u_i t^{m-m_i} \right)^{-1/m}, 
\]
where $m$ is the first pole number which is not divisible by the characteristic $p$, $u_i/t^{m_i}$ for $1 \leq i \leq r$ are functions in $L(mP)$ ($u_i$ is a unit) and $1/t^m$ is the function corresponding to $m$ ($t$ being the local uniformizer). In the latter function the unit is absorbed  by Hensel's lemma. 
%
%
%
\subsection{A canonical selection of uniformizer}
In an attempt to describe in explicit form automorphisms 
of $k[[t]]$ let us quote here some results from 
\cite{kontogeorgis2008ramification}. We will work with the corresponding HKG-cover $X \stackrel{G}{\longrightarrow} \mathbb{P}^1$ corresponding to a finite subgroup $G \subset \Aut (k[[t]])$. Again let $m_r$ denote the first pole number not divisible by the characteristic and $f_i$, $i=1,\ldots,\dim L(m_rP)=r$ a basis for the space $L(m_rP)$, such that 
\begin{equation} \label{poleSeq}
(f_i)_\infty=m_i.
\end{equation}
As we have seen 
this basis is not unique but eq.(\ref{poleSeq}) implies that if the element $f_i$ is selected, then $f_i'= \lambda_i f_i +a_i$, where
$a_i\in L\big( (m_i-1)P\big)$  is also a basis element of valuation $m_r$.

This means that the base change we will consider, corresponds to invertible upper triangular matrices, i.e. to linear maps which keep the flag of the vector spaces $L(m_i P)$. 

Recall that $m=m_r$ is the first pole number not divisible by $p$.
Let us focus on the element $f_r$. This element is of the form $f_r=u_m/t^m$, where $u_m$ is a unit. Since $(m,p)=1$ we know by Hensel's lemma that $u_m$ is an $m$-th power so by a change of uniformizer we can assume that $f_r=1/t^m$. 
When changing from a uniformizer $t$ to a uniformizer
$t'=\phi(t)=tu(t)$ ($u(t)$ is a unit in $k[[t]]$), 
the automorphism $\sigma\in k[[t]]$ expressed as an element in $k[[t']]$  is a conjugate of the initial automorphism, i.e. $ \phi \sigma \phi^{-1}$. By selecting the  canonical uniformizer with respect to $f_r$ we see that the expression of an arbitrary  $\sigma$ can take a simpler representation after conjugation.
Also this result is in accordance with (and can be seen as a generalization of) the result of Klopsch and Lubin, \cite{klopsch2000automorphisms}, \cite{lubin2011torsion}.
The selection of uniformizer $t=t_{f_r}$ is unique once $f_r$ is selected. 

\begin{definition}
We will call the uniformizer $t_{f_r}=f_r^{-1/m}$ the {\em canonical uniformizer} corresponding to $f_r$.
\end{definition}
What happens if we change
the function $f_r$ to
 $f'_r=f_r+a$,
 where $a \in L\big( (m-1) P\big)$?  Then $a=u/t^\mu$, with $0 \leq \mu< m$ and in this case the new uniformizer is given by 
\[
t_{f_r'}=\left(f_r+\frac{u}{t^\mu} \right)^{-1/m}
= t
\left( 1 +u t^{m-\mu} \right)^{-1/m}
=t \left(1+ a t^m \right)^{-1/m}. 
\]
Keep in mind that the set of uniformizers for the local ring $k[[t]]$ equals to $t u(t)$, where $u$ is a unit of the ring $k[[t]]$.  

Let $\bar{m}_1, \ldots,\bar{m}_s$ be the generators of the Weierstrass semigroup $H(P)$. 
These elements correspond to a successive sequence of 
function fields $F_i=F_{i-1}(\bar{f}_{i-1})$ so that 
$v(\bar{f}_{i-1})= p^{|\mathrm{Gal}(F/F_i)|} \lambda_{h-1}=\bar{m}_i$. It is not clear that $\bar{m}_i \geq \bar{m}_j$ for $j<i$. However if for some $j$ we have 
$\bar{m}_j < \bar{m}_i$ for some $i<j$ then 
\[
\sigma(\bar{f}_j) =\sigma(\bar{f}_j)+ \bar{C}_j(\sigma),
\text{ where } \bar{C}_j \in k[\bar{f}_0,\ldots,
\widehat{\bar{f}_i},\ldots,\bar{f}_{j-1}],
\]
that is, $\bar{f}_i$ does not appear in any term of the polynomial expression of $\bar{C}_j(\sigma)$, for all $\sigma \in G$. This means that we can generate an HKG-cover with corresponding function field generated by fewer elements than the initial one.

If we assume that among all  HKG-covers which correspond to a local action of $G$ on $k[[t]]$ we select  one whose function field is minimally generated then 
$\bar{m}_1 < \bar{m}_2 < \ldots < \bar{m}_s$.

\begin{lemma}
\label{index-m}
Let $m=m_r$ be the  first pole number not divisible by the characteristic $p$. Then $m=\bar{m}_s$, that is the pole number corresponding to the last generator $\bar{f}_s$. 
\end{lemma}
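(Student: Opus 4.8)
The plan is to identify $\bar{m}_s$ with $m_{c_n+1}$, where $c_n$ is the last jump of the representation filtration, and then to trap $m_{c_n+1}$ between $m_r$ from above and $m_r$ from below. By construction $\bar{f}_s$ is the last of the generators produced from the representation/ramification jumps, that is $\bar{f}_s=f_{c_n+1}$ (the notation $\bar{f}_i=f_{c_i+1}$), so $\bar{m}_s=m_{c_n+1}$; hence the lemma asserts precisely that $m_{c_n+1}=m_r$, equivalently that $c_n=r-1$.

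The inequality $m_{c_n+1}\le m_r$ is immediate: the jumps $c_1<c_2<\cdots<c_n$ all lie in $\{0,1,\dots,r-1\}$ (recall $\Ker\rho_r=\{1\}$, since $\rho_r$ is faithful), so $c_n+1\le r$ and therefore $m_{c_n+1}\le m_r$, the pole numbers $m_0<m_1<\cdots<m_r$ being strictly increasing. For the reverse inequality I would argue by contradiction: suppose $m_{c_n+1}<m_r$. Then each $m_{c_i+1}\le m_{c_n+1}<m_r$ is a pole number strictly below $m_r$, hence — as $m_r$ is by definition the \emph{first} pole number not divisible by $p$ — divisible by $p$; and $|G_1(P)|$, being the order of a nontrivial $p$-group, is also divisible by $p$. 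But by Theorem~\ref{tsouk1} the Weierstrass semigroup $H(P)$ is minimally generated by $m_{c_1+1},\dots,m_{c_n+1}$ (together with $|G_1(P)|$ when $G_1(P)=G_2(P)$), so all of its generators would be divisible by $p$, forcing $H(P)\subseteq p\Z_{\ge 0}$ — impossible, since $m_r\in H(P)$ and $p\nmid m_r$. Hence $m_{c_n+1}\ge m_r$, and the two inequalities give $m_{c_n+1}=m_r$, i.e. $\bar{m}_s=m$.

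The point that needs the most care, and which I expect to be the only real obstacle, is the accounting for the extra generator $|G_1(P)|$ that appears when $G_1(P)=G_2(P)$: one must check both that it cannot be the generator responsible for $m_r\notin p\Z$ (it is a prime power, so it is not) and that it does not interfere with the chain $m_{c_1+1}<\cdots<m_{c_n+1}\le m_r$, which involves only the $m_{c_i+1}$. Everything else is the two-sided estimate above. I would add two remarks: first, the equality $c_n=r-1$ obtained this way is also recorded in \cite[rem.~9]{Karanikolopoulos2013}, but here it is recovered purely from the definition of $m_r$ and the semigroup description of Theorem~\ref{tsouk1}; second, the lemma is fully compatible with the possibility $|G_1(P)|>m_r$, because in that case $|G_1(P)|$ is the pole order at $P$ of the \emph{first} generator $\bar{f}_0$ — a function with a simple pole at the branch point of $\mathbb{P}^1$, which is totally ramified of degree $|G_1(P)|$ in $X$ — and not of the last generator $\bar{f}_s$, so it does not disturb the conclusion $\bar{m}_s=m$.
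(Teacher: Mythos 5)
Your proof is correct, and its engine is the same divisibility argument as the paper's: since $m_r\in H(P)$ and $p\nmid m_r$, not all generators of the Weierstrass semigroup listed in Theorem~\ref{tsouk1} can be divisible by $p$. The packaging differs slightly. The paper argues that the earlier generators $\bar{f}_i$, $i<s$, lie in the proper subfields $F_{i+1}\subsetneq F$ of the tower, so their pole orders $\bar{m}_i$ are divisible by $p$; hence the one prime-to-$p$ generator must be the last, the inequality $\bar{m}_s\le m_r$ being left implicit (it also follows from the cited equality $c_n=r-1$ of \cite[rem.~9]{Karanikolopoulos2013}). You instead identify $\bar{m}_s=m_{c_n+1}$ and sandwich it: $c_n\le r-1$ (from faithfulness of $\rho_r$, so the jumps stop before $r$) gives $\bar{m}_s\le m_r$, while your contradiction argument — if $\bar{m}_s<m_r$ then every generator, including the possible extra generator $|G_1(P)|=p^h$, is divisible by $p$, forcing $H(P)\subseteq p\mathbb{Z}_{\ge 0}$, contradicting $p\nmid m_r$ — gives $\bar{m}_s\ge m_r$. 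Both routes rest on Theorem~\ref{tsouk1}; yours makes the upper bound explicit rather than delegating it, and your separate accounting of the extra generator $p^h$ in the case $G_1(P)=G_2(P)$ is exactly the bookkeeping needed, so there is no gap.
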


\begin{proof}
It is clear that not all pole numbers are divisible by $p$ since $m\in H(P)$, $p\nmid m$. 
So at least one generator must be prime to $p$. On the 
other hand $F_i=F_{i-1}(\bar{f}_{i-1})$, thus the pole numbers $\bar{m}_i$ of elements $\bar{f}_i$ for $i<s$ are divisible by $p$, see also \cite[eq. (6)]{Karanikolopoulos2013}. Therefore only the last generator can be not divisible by $p$.  
\end{proof}

\begin{theorem}
\label{explicitForm}
Let $\bar{C}_s \in H^1(G,k_{\mathbf{n},m}[\bar{f}_0,\bar{f}_1,\ldots,\bar{f}_{s-1}])$ be the cocycle 
corresponding to $m=m_s$, where $m$ is the first pole number not divisible by $p$, see lemma \ref{index-m}.
We choose as uniformizer  the canonical uniformizer  $t=\bar{f}_s^{-1/m}$.
We define the representation:
\begin{align} \nonumber
\Phi:
 G & \longrightarrow \Aut(k[[t]])\\
 \label{simpleConj}
\sigma & \longmapsto 
\left(
t \mapsto t(1+\bar{C}_s(\sigma) t^m)^{-1/m}
\right).
\end{align}
The expression $1+\bar{C}_s(\sigma) t^m)^{-1/m}$ can be expanded as a powerseries  using the binomial theorem and determines uniquely an automorphisms of $k[[t]]$. 
We have that for all $\sigma,\tau\in G$
\[
\Phi(\tau\sigma)=\Phi(\sigma)\Phi(\tau).
\]
Furthermore $\Phi$ is a monomorphism. 
\end{theorem}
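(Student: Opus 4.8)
The plan is to verify directly that the assignment $\sigma \mapsto \Phi(\sigma)$ lands in $\mathcal{N}(k) \subset \Aut(k[[t]])$, that it reverses multiplication, and that it is injective. First I would check that each $\Phi(\sigma)$ is a well-defined normalized automorphism: since $\bar{C}_s(\sigma) \in k_{\mathbf{n},m}[\bar{f}_0,\ldots,\bar{f}_{s-1}] = L\big((m-1)P\big)$, in the canonical uniformizer $t = \bar{f}_s^{-1/m}$ the function $\bar{C}_s(\sigma)$ is a Laurent tail $u/t^\mu$ with $0 \le \mu < m$, so $\bar{C}_s(\sigma)\, t^m$ lies in $t\,k[[t]]$ and $(1+\bar{C}_s(\sigma)t^m)^{-1/m}$ expands as a power series with constant term $1$; hence $t \mapsto t(1+\bar{C}_s(\sigma)t^m)^{-1/m}$ is a normalized substitution automorphism. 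Note also that when $\sigma = 1$ we have $\bar{C}_s(1) = 0$, so $\Phi(1) = \mathrm{id}$.

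The heart of the argument is the anti-homomorphism property $\Phi(\tau\sigma) = \Phi(\sigma)\Phi(\tau)$. I would exploit the fact that $\Phi(\sigma)$ is, by construction, exactly the image of the geometric action of $\sigma$ on $X$ read off through the canonical uniformizer at $P$: recall $\sigma(\bar{f}_s) = \bar{f}_s + \bar{C}_s(\sigma)$, and since $t = \bar{f}_s^{-1/m}$ we get $\sigma(t) = \sigma(\bar{f}_s)^{-1/m} = (\bar{f}_s + \bar{C}_s(\sigma))^{-1/m} = \bar{f}_s^{-1/m}(1 + \bar{C}_s(\sigma)\bar{f}_s^{-1})^{-1/m} = t(1 + \bar{C}_s(\sigma)t^m)^{-1/m}$, where in the last step I substitute $\bar{f}_s^{-1} = t^m$ and express $\bar{C}_s(\sigma)$ as a Laurent tail in $t$. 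Thus $\Phi(\sigma)$ literally \emph{is} the action of $\sigma$ on the completed local ring $\mathcal{O}_{X,P}^{\wedge} \cong k[[t]]$. The order reversal then comes from the standard contravariance of the action of $\Gal$ on functions versus the induced action on the substitution group: for $f \in k[[t]]$, $(\tau\sigma)(f) = \tau(\sigma(f))$, and if we write $\Phi(\sigma)$ for the substitution such that $\sigma(t) = \Phi(\sigma)(t)$, then composing $\sigma(t)$ (a power series in $t$) with $\tau$ applied coefficientwise-and-to-$t$ gives $\Phi(\sigma)\circ \Phi(\tau)$ evaluated at $t$ read in the opposite order — I would make this precise by noting that $k[[t]]$ is generated over $k$ by $t$ and that the group $G$ acts $k$-linearly, so the cocycle identity $\bar{C}_s(\tau\sigma) = \bar{C}_s(\tau) + \tau\,\bar{C}_s(\sigma)$ translates, after the substitution $\bar{f}_s^{-1} = t^m$ and using the explicit expansion \eqref{exp-cf} for how $\tau$ acts on the monomials in $\bar{C}_s(\sigma)$, precisely into $\Phi(\tau\sigma)(t) = \Phi(\tau)\big(\Phi(\sigma)(t)\big)$ — i.e. the composite in the order $\Phi(\sigma)$ then $\Phi(\tau)$, which is $\Phi(\sigma)\Phi(\tau)$ in substitution-group notation.

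Finally, injectivity of $\Phi$: suppose $\Phi(\sigma) = \mathrm{id}$. Then $\bar{C}_s(\sigma)t^m = 0$, so $\bar{C}_s(\sigma) = 0$, hence $\sigma(\bar{f}_s) = \bar{f}_s$. Since $F = F_{s+1} = F_1(\bar{f}_0, \ldots, \bar{f}_s)$ and more to the point the representation $\rho_r$ on $L(m_rP) = L(\bar{m}_sP)$ is faithful (this is the result of \cite{kontogeorgis2008ramification} quoted in the excerpt, with $m_r = \bar{m}_s$ by Lemma~\ref{index-m}), an element fixing $\bar{f}_s$ — equivalently acting trivially on the flag up to $L(\bar{m}_sP)$, once one also invokes that fixing $\bar{f}_s$ forces $\sigma \in \ker\rho_{c_n}$ and $c_n = r-1$ so $\ker\rho_{c_n}$ meets $G$ trivially beyond — must be the identity. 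I expect the main obstacle to be the bookkeeping in the anti-homomorphism step: carefully tracking that the geometric Galois action is contravariant while the Nottingham group multiplication is composition of substitutions, so that one genuinely obtains $\Phi(\tau\sigma) = \Phi(\sigma)\Phi(\tau)$ rather than the naive $\Phi(\tau)\Phi(\sigma)$, and checking that the expansion of $\bar{C}_s(\sigma)$ as a Laurent tail in $t$ interacts correctly with $\tau$ acting through \eqref{exp-cf}.
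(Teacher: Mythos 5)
Your overall strategy coincides with the paper's: you identify $\Phi(\sigma)(t)$ with the geometric action $\sigma(t)$ read through the canonical uniformizer $t=\bar{f}_s^{-1/m}$, and you derive $\Phi(\tau\sigma)=\Phi(\sigma)\Phi(\tau)$ from the cocycle identity $\bar{C}_s(\tau\sigma)=\bar{C}_s(\tau)+\tau\bar{C}_s(\sigma)$, the essential mechanism being that when the series $\Phi(\tau)(t)$ is substituted into $\Phi(\sigma)(t)$ the function $\bar{C}_s(\sigma)$ (a Laurent tail in $t$, not a constant) becomes $\tau\big(\bar{C}_s(\sigma)\big)$; this is exactly the computation carried out in the paper's proof, and your bookkeeping of the order of composition is consistent with it.

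The one step that does not hold up as written is injectivity. Fixing $\bar{f}_s$ is \emph{not} ``equivalently acting trivially on the flag up to $L(\bar{m}_sP)$'': faithfulness of $\rho_r$ says that an element acting trivially on \emph{all} of $L(m_rP)$ is the identity, whereas $\sigma(\bar{f}_s)=\bar{f}_s$ says nothing a priori about the action of $\sigma$ on $\bar{f}_0,\ldots,\bar{f}_{s-1}$; likewise the assertion that fixing $\bar{f}_s$ forces $\sigma\in\ker\rho_{c_n}$ is unsupported, since $\ker\rho_{c_n}=\ker\rho_{r-1}$ is defined by triviality on $L(m_{r-1}P)$, i.e. a condition on the \emph{smaller} pole numbers. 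The conclusion is true, but the clean argument (and the paper's) is already contained in your second paragraph: $\bar{C}_s(\sigma)=0$ gives $\sigma(t)=t$, because $\Phi(\sigma)$ is the restriction to $k[[t]]$ of the geometric action of $\sigma$; since $G$ is by construction a subgroup of $\Aut(k[[t]])$ (equivalently $G=\mathrm{Gal}(F/F_0)$ acts faithfully on $F\subset k((t))$), $\sigma(t)=t$ forces $\sigma=1$. If you want to be scrupulous there, note that $\sigma(t)=\sigma(\bar{f}_s)^{-1/m}$ determines $\sigma(t)$ only up to an $m$-th root of unity; that root is $1$ because $\sigma$ has $p$-power order and $\gcd(m,p)=1$ --- a point the paper also leaves implicit.
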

\begin{proof}
We begin by noticing that $\sigma(\bar{f}_s)=\bar{f}_s+\bar{C}_s(\sigma)$ and  we can  select $t$ so that $t^{-m}=\bar{f}_s$. 
Using the above expression we can determine the value of $\sigma(t)$ using 
\[
\frac{1}{\sigma(t)^{m}}=\frac{1}{t^m}+\bar{C}_s(\sigma), 
\]
see also \cite[eq. 4]{kontogeorgis2008ramification}. 
In this way $\sigma$ coincides
with the image of $\Phi(\sigma)\in \Aut(k[[t]])$ in eq. (\ref{simpleConj}).

Recall that $\sigma \in G$ acts on the elements $\bar{f}_0,\ldots,\bar{f}_{s-1}$ by definition in terms of the cocycles $\bar{C}_i(\sigma)$. 
This was defined to be a left action.
Also this action is by construction assumed to be compatible with the action of $G$ on $k[[t]]$ in the sense that when we see the elements $\bar{f}_i$ as elements in $k[[t]][t^{-1}]$, then $\sigma(\bar{f}_i)=\Phi(\sigma)(\bar{f}_i)$, that is the action of $\sigma$ on $\bar{f}_i$ as elements in $k_{\mathbf{n},m_{i+1}}[\bar{f}_0,\bar{f}_1,\ldots,\bar{f}_{s-1}]$  coincides with the action of $\sigma$ on $f_i$ seen as an element in the quotient field of $k[[t]]$.
In other words we have 
\[
\sigma(f_i(t))=f_i(\sigma(t))=f_i(t)+C_i(\sigma).
\]

We will prove first that this is a homomorphism i.e.
\begin{equation}
\label{hom-notti}
 t(1+ \bar{C}_s(\tau\sigma) t^m))^{-1/m}=
t(1+\bar{C}_s(\sigma)t^m)^{-1/m} \circ 
t(1+\bar{C}_s(\tau)t^m)^{-1/m},
\end{equation}
where $\circ$ denotes the composition of two powerseries. 
The right hand side of the above equation equals
\[
t
\big(1+\bar{C}_s(\tau)t^m
\big)^{\frac{-1}{m}}
\left(
1+ 
\frac
{
\tau(\bar{C}_s(\sigma))
t^m
}
{1+\bar{C}_s(\tau)t^m)}   
\right)^{\frac{-1}{m}} \\
=
t 
\left(
1 +\big(\bar{C}_s(\tau)+\tau\bar{C}_s(\sigma) \big)t^m
\right)^{-1/m}
\]
so eq. (\ref{hom-notti}) holds by the cocycle condition for 
$\bar{C}_s$. 

The kernel of the homomorphism $\Phi$, consists of all elements
$\sigma \in G$ such that $\bar{C}_s(\sigma)=0$. 
But if $\bar{C}_s(\sigma)=0$ then $\sigma(t)=t$ and $\sigma$ is the identity. 
\end{proof}
\begin{remark}
The above construction behaves well when we substitute $f_m$ with $f_m'=f_m+a$. In any case the representation given in eq. (\ref{simpleConj}) is given in terms of the canonical uniformizer 
$t_{f_r}$ corresponding to the element $\bar{f}_s=f_r$ which gives rise to the cocycle $\bar{C}_s$. 
\end{remark}
\begin{remark}
Equation (\ref{simpleConj}) implies that the knowledge
of the cocycle $\bar{C}_s$ implies the knowledge of $\sigma(t)$,
which in turn gives us how $\sigma$ acts on all other elements
$\bar{f}_i$ for all $0\leq i \leq s-1$.
 This seems to imply that $\bar{C}_s$
can determine 
all other cocycles $\bar{C}_\nu$ for all $1 \leq \nu \leq s-1$. 
This is not entirely correct.
Indeed,  $\bar{C}_s$ is a cocycle with values on the $G$-module
$k_{\mathbf{n},\bar{m}_{s}}[\bar{f}_0,\bar{f}_1,\ldots,\bar{f}_{s-1}]$, therefore the action of $G$ on $\bar{f}_{i}$ for $0\leq i \leq s-1$ is assumed to be known 
 and is part of the definition of the cocycle $\bar{C}_s$. This means that $\bar{C}_i$ are assumed to be known and part of the definition of $\bar{C}_{s}$.
\end{remark}

\begin{proposition}
\label{ram-comp}
If $\sigma \in G$, 
$\sigma\neq 1$, then 
\[
v_P(\sigma(t)-t)=m-v_P\big(\bar{C}_s(\sigma)\big)+1=I(\sigma), 
\]
where 
 $-I(\sigma)$ 
is the Artin character since $k$ is algebraically closed, see \cite[VI.2]{serre2013local}.
Therefore $\sigma\in G_{I(\sigma)}-G_{I(\sigma)+1}$. 
\end{proposition}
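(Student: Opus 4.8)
The plan is to obtain $v_P(\sigma(t)-t)$ by a direct valuation computation from the explicit description of $\sigma(t)$ in Theorem~\ref{explicitForm}, and then to translate the answer into the language of the ramification filtration (\ref{ramfiltratseq}) and of the Artin character. First I would substitute eq.~(\ref{simpleConj}): with $t=\bar f_s^{-1/m}$ the canonical uniformizer one has $\sigma(t)=t\bigl(1+\bar C_s(\sigma)t^m\bigr)^{-1/m}$, so expanding the binomial power series gives
\[
\sigma(t)-t \;=\; t\Bigl[\bigl(1+\bar C_s(\sigma)t^m\bigr)^{-1/m}-1\Bigr]
\;=\; -\tfrac{1}{m}\,\bar C_s(\sigma)\,t^{m+1}\;+\;\sum_{\nu\ge 2}\binom{-1/m}{\nu}\,\bar C_s(\sigma)^{\nu}\,t^{\nu m+1}.
\]
Since $p\nmid m$ the coefficient $-1/m$ lies in $k^{\times}$, and for $\sigma\neq 1$ we have $\bar C_s(\sigma)\neq 0$ by injectivity of $\Phi$ in Theorem~\ref{explicitForm}, so the displayed leading term is honestly nonzero.

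Next I would control the valuations of the terms. By Proposition~\ref{generatorsofGS}, $\bar C_s(\sigma)\in k_{\mathbf n,m}[\bar f_0,\dots,\bar f_{s-1}]=L\bigl((m-1)P\bigr)$, so its pole order at $P$ is at most $m-1$, i.e.\ $v_P(\bar C_s(\sigma))\ge 1-m$ and hence $v_P(\bar C_s(\sigma)t^m)\ge 1>0$. Writing $\bar C_s(\sigma)^{\nu}t^{\nu m+1}=t\cdot(\bar C_s(\sigma)t^m)^{\nu}$ we get $v_P(\bar C_s(\sigma)^{\nu}t^{\nu m+1})=1+\nu\,v_P(\bar C_s(\sigma)t^m)$, which is strictly increasing in $\nu$; hence the $\nu=1$ term strictly dominates and
\[
v_P\bigl(\sigma(t)-t\bigr)\;=\;v_P(t)+v_P\bigl(\bar C_s(\sigma)t^m\bigr)\;=\;1+m+v_P\bigl(\bar C_s(\sigma)\bigr)\;=:\;I(\sigma),
\]
which is the formula claimed in the proposition (reading $-v_P(\bar C_s(\sigma))$ as the pole order of $\bar C_s(\sigma)$ at $P$).

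Finally I would read off the ramification data. Since $t$ is a uniformizer at $P$ and $k$ is algebraically closed, membership in the groups of (\ref{ramfiltratseq}) can be tested on $t$, i.e.\ $\sigma\in G_j(P)\iff v_P(\sigma(t)-t)\ge j+1$; thus the single integer $I(\sigma)$ pins down the unique step of the filtration containing $\sigma$, as in the statement. The assertion that $-I(\sigma)$ is the value of the Artin character at $\sigma\neq 1$ is then exactly Serre's formula for the Artin character in the case of an algebraically closed residue field, \cite[VI.2]{serre2013local}.

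I do not expect a serious obstacle here. The one point requiring care is that no cancellation among the higher-order terms can spoil the leading term of the expansion, and this is guaranteed precisely by the choice of canonical uniformizer: the relation $\bar f_s=t^{-m}$ (Lemma~\ref{index-m}) together with $\bar C_s(\sigma)\in L((m-1)P)$ forces $v_P(\bar C_s(\sigma)t^m)>0$, so the series $(1+\bar C_s(\sigma)t^m)^{-1/m}-1$ begins honestly with its degree-one term. Beyond that, everything is the standard dictionary between $v_P(\sigma(t)-t)$, the lower ramification filtration, and the Artin conductor.
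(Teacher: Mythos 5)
Your proof is correct and takes essentially the same route as the paper, whose own argument is just ``the valuation of $\sigma(t)-t$ comes from the binomial expansion of eq.~(\ref{simpleConj}), and the rest is the definition of the ramification group''; you merely make explicit the valuation bookkeeping (dominance of the $\nu=1$ term since $v_P(\bar{C}_s(\sigma)t^m)>0$, nonvanishing of $\bar{C}_s(\sigma)$ for $\sigma\neq 1$ via injectivity of $\Phi$). Your remark that the stated formula should be read with $-v_P(\bar{C}_s(\sigma))$ interpreted as the pole order of $\bar{C}_s(\sigma)$ at $P$ is exactly the right reconciliation of the sign convention.
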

\begin{proof}
The valuation of $\sigma(t)-t$ comes from the binomial expansion of eq. (\ref{simpleConj}). The rest is the definition of the ramification group. 
\end{proof}
%
%
%
\subsection{Application: Elements of order $\mathbf{p^h}$ in the Nottingham group}
\subsubsection{On the form of elements of order $p$}
It is known that every element of order $p$ in $\Aut(k[[t]])$ is conjugate to the automorphism 
\[
t \mapsto t (1+c t^m)^{-1/m}, \quad \text{ where } c\in k, 
\]
for some $m$ prime to $p$, see \cite[prop. 1.2]{klopsch2000automorphisms}
and \cite[th. 2.2]{lubin2011torsion}.

We can obtain this result using theorem \ref{explicitForm}. 
Let $\sigma$ be an automorphism of $k[[t]]$ of order $p$. Let $X\rightarrow \mathbb{P}^1$ be the corresponding HKG-cover.  
The sequence of higher ramification groups equals $\langle \sigma \rangle=G_0=G_1=\cdots=G_m > \{1\}$, i.e. there is only one jump in the ramification filtration. 
If $m=1$ then $G_{i}(P)=\{1\}$ for $i \geq 2$ and in this case the genus $g_X=0$. This is a trivial case so we can assume that $m>1$.
From theorem \ref{tsouk1} we know that the Weierstrass semigroup is  generated by $p=|G_1(P)|$ and $m_r$. If $m_i$ is a pole number less than $m_r$ then $m_i$ is a multiple of $p$, hence the corresponding elements $f_i$ with pole order $m_i$ at $P$ will be powers of $f_0$ where $(f_0)_\infty=pP$. 


Since the ramification filtration jumps only once, the same holds for the representation filtration, i.e. 
$$G_1(P)=\Ker\rho_{c_1}>\{1\}$$
So if $\sigma$ is not the identity then by \cite[prop.27]{Karanikolopoulos2013} we have that
\begin{align*}
\sigma(f_0^i)&=f_0^i\text{ for }i=0,1,\dots,\lfloor m_r/p\rfloor\text{ and}\\
\sigma(f_{c_1+1})=\sigma(f_r)&=f_r+C(\sigma)\text{ where } C(\sigma)\in k^\times.
\end{align*}
Compare also with the computation of proposition \ref{ram-comp}. To obtain the result we notice the following; changing the local uniformizer to a canonical  one imposes the substitution of $\sigma$ by a conjugate which, by theorem  \ref{explicitForm}, maps
$t$ to the desired form.

%
\subsubsection{Application to the case of cyclic groups}
Let us now consider an element $\sigma$ of order $p^h$. As before the cyclic group 
\[
G_0(P)=G_1(P)=\cdots=G_{b_1}(P)\gneqq 
G_{b_1+1}(P)=\cdots=
G_{b_2}(P)\gneqq\cdots\gneqq G_{b_\mu}(P)\gneqq\{1\}
\]
Since a cyclic group has only cyclic subgroups and all quotients of cyclic groups are cyclic, while 
$G_{b_i}/G_{b_{i+1}}$ is elementary abelian, we see that the number of gaps $\mu$ is equal to $h$
and $p^{h-i}$ is the exact power of $p$ dividing each $\bar{m}_i$.

Observe  that all intermediate elementary abelian extensions 
$F_{i+1}/F_i=F_i(\bar{f}_i)/F_i$ are cyclic. The additive polynomial describing  the extension $F_i(\bar{f}_i)/F_i$ is given by 
\[
Y^p-\B{C}_i^{p-1} Y=\bar{f}_i^p -\bar{C}_i^{p-1} \bar{f}_i,
\]
by computation of the Moore determinant 
$\det
\begin{pmatrix}
C_i & Y \\ C_i^p & Y^p
\end{pmatrix}
$, where $\bar{C}_i$ is computed at a generator 
$\sigma^{p^i}$ of the cyclic group $\mathrm{Gal}(F_{i+1}/F_i)=G_{b_{i+1}}/G_{b_i}$, (i.e. $\sigma^{p^i}(\B{f}_i)=\B{f}_i+\bar{C}_i(\sigma^{p^i})$).
Since $\bar{C}_i\in k$, if we rescale $\bar{f_i}$ by $\bar{f_i}/C_i$, we can 
assume without loss of generality that the equation is an 
Artin-Schreier one:
\[
Y^p-Y=\bar{f}_i^p - \bar{f}_i=D_i, \text{ where } D_i\in F_i.
\] 
Let $g$ be an automorphism of the HKG-cover $X$. 
Since $g(\bar{f}_\nu)=\bar{f}_\nu+\bar{c}_\nu(g)$ and $\bar{c}_\nu(g)\in F_{\nu-1}$, the automorphism $g$ gives rise to an automorphism $g: F_\nu \rightarrow F_\nu$ for all $\nu$. 
We have that 
\begin{equation}
\label{compat*}
\bar{C}_i(g)^p -\bar{C}_i(g) = (g-1)(\bar{f}_i^p -\bar{f}_i)=(g-1)D_i.
\end{equation}
Notice that eq. (\ref{compat*}) has many solutions $\bar{C}_i(g)$ for a fixed $g$, which differ by an element $\bar{c}_i(\sigma)$ for some $\sigma \in \mathrm{Gal{}}(F_{i+1}/F_i)$, since $(g\sigma-1)(D_i)=(g-1)(D_i)$.  

 The representation filtration has the following form (the filtrations are collectively depicted in the diagrams below)
 
\[
F^{G_1(P)}=F_0=F^{\mathrm{ker} \rho_0}
\subset 
F_1=F^{\mathrm{\mathrm{ker} \rho_1}}
\subset \cdots \subset
F_r=F^{\mathrm{ker}\rho_r}=F.
\]
We have $p^{h-i}=|\mathrm{ker} \rho_{c_{i+1}}|$ for $0\leq i \leq n-1$ and $p^{h}=|G_1(P)|$.
The generators of the Weierstrass semigroup are 
$
p^{h},p^{h-1} \lambda_1,\ldots, p\lambda_{\mu-1},\lambda_\mu.
$
We have the following tower of fields:

\noindent
\begin{minipage}[l]{0.38\textwidth}
\[
\xymatrix{
  F=F_{h+1}=F_{h}(\bar{f}_{h}) \ar@{-}[d]^{\mathbb{Z}/p\Z} 
\ar@{-}@/_5pc/_{\Z/p^h\Z}[dddd]
  \\
  F_{h}=F_{h-1}(\bar{f}_{h-1}) \ar@{-}[d]^{\Z/p\Z} \\
  F_{h-1}  \ar@{.}[d] \\
F_2=F_1(\bar{f}_1)\ar@{-}[d]^{\mathbb{Z}/p\Z}\\
  F_1=k(\bar{f}_0) \ar@{=}[d] \\
  F_0=F^{G_1(P)}
}
\]
\end{minipage}
\begin{minipage}[c]{0.3\textwidth}
\[
\xymatrix{
\{1\} \ar@{-}[d]^p\\
G_{b_{h}}=\ker\rho_{c_{h}} \ar@{-}[d]^p\\
G_{b_{h-1}}=\ker\rho_{c_{h-1}}\ar@{.}[d]\\
G_{b_2}=\ker\rho_{c_2}\ar@{-}[d]^{p}\\
G_{b_1}=\ker\rho_{c_1}\ar@{=}[d]\\
G_1(P)
}
\]
\end{minipage}
\begin{minipage}[r]{0.3\textwidth}
\[
\xymatrix{
1\ar@{-}[d]^{p}\\
\langle\sigma^{p^{h-1}}\rangle\,(\text{order=}p)\ar@{-}[d]^p \\
\langle\sigma^{p^{h-2}}\rangle\,(\text{order=}p^2)\ar@{.}[d]\\
\langle\sigma^p\rangle\,(\text{order=}p^{h-1})\ar@{-}[d]^{p}\\
\langle\sigma\rangle \ar@{=}[d]\\
G_1(P)
}
\]
\end{minipage}

For every $g\in \mathrm{Gal}(F/F_1)$ we have
\[
g(\bar{f}_{r-1})-\bar{f}_{r-1}=\bar{C}_{r-1}(g).
\]


For a cyclic group $\Z/p^i\Z$ the cohomology is given by:
\[
H^1(\Z/p^i\Z,A)=\frac{
  \{a \in A: N(a)=0\}
}
{
  (\sigma_i-1)A
},
\]
where $\sigma_i$ is a generator of the cyclic group $\Z/p^i \Z$ and $N=1+\sigma+\dots+\sigma^{p^i-1}$ is the norm, see \cite[th. 6.2.2, p. 168]{Weibel}. 
In view of theorem \ref{kernelCoho} we will consider 
the groups $\mathrm{Gal}(F_{i+1}/F_1)$, which are generated by 
the generator $\sigma$ of the cyclic group $\mathrm{Gal}(F_{h+1}/F_1)$ modulo the subgroup $\mathrm{Gal}(F_{h+1}/F_{i+1})$. 
Thus in the group $\mathrm{Gal}(F_{i+1}/F_1)$ the order of $\sigma$ equals $p^i$.

Observe now that $\tau=\sigma^{p^{i-1}}$ acts trivially on 
$A=k_{\mathbf{n},m_i}[\bar{f}_0,\bar{f}_1,\ldots,\bar{f}_{i-1}]$. We now  compute the norm for $\mathrm{Gal}(F_{i+1}/F_1)$: 
\begin{align*}
1+\sigma+\cdots+\sigma^{p^i-1}
&=
\sum_{\nu=0}^{p^i-1}
\sigma^\nu
=\sum_{\pi=0}^{p-1}
\sum_{\upsilon=0}^{p^{i-1}-1}
\sigma^{\pi p^{i-1}}\sigma^{\upsilon}
\\
&= \sum_{\pi=0}^{p-1} \tau^{\pi}
\sum_{\upsilon=0}^{p^{i-1}-1} \sigma^{\upsilon},
\end{align*}
where $\tau:=\sigma^{p^{i-1}}$,
and observe that the above equation restricted on $A$ gives
\[
1+\sigma+\cdots+\sigma^{p^i-1}=p \cdot 
\sum_{\upsilon=0}^{p^{i-1}-1} \sigma^{\upsilon}
\]
which is zero on $A$. So we finally arrive at the computation:
\[
H^1
\left(
\Z/p^i\Z,
k_{\mathbf{n},\bar{m}_i}[\bar{f}_0,\bar{f}_1,\ldots,\bar{f}_{i-1}]
\right)
=
k_{\mathbf{n},\bar{m}_i}[\bar{f}_0,\bar{f}_1,\ldots,\bar{f}_{i-1}]_{
  \Z/p^i\Z
},
\]
where the latter space is the space of $\Z/p^i\Z$-coinvariants.
\begin{proposition}
\label{22prop}
A cyclic group of the Nottingham group is described by a series of elements $\bar{C}_i\in k_{\mathbf{n},\bar{m}_i}[\bar{f}_0,\ldots,\bar{f}_{i-1}]_{\Z/p^i\Z}$ so that $\bar{C}_i^p-\bar{C}_i$ is zero in the space $k_{\mathbf{n},\bar{m}_i}[\bar{f}_0,\ldots,\bar{f}_{i-1}]_{\Z/p^i\Z}$.

In order to ensure that the element $\sigma$ has order $p^h$
we should have, $\bar{C}_s(\sigma^{p^\nu})\neq 0$, for all $0\leq \nu < h$ i.e. 
\[
\left(
1+\sigma+\cdots+\sigma^{p^{\nu}-1}
\right)\bar{C}(\sigma)\neq 0. 
\]
\end{proposition}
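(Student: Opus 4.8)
The strategy is to assemble the statement from two ingredients that have already been developed: the cohomological characterization of HKG-covers in Theorem \ref{kernelCoho} (specialized to the cyclic case) and the explicit description of the conjugacy class of the action furnished by Theorem \ref{explicitForm} together with Proposition \ref{ram-comp}. First I would recall that for a cyclic group $G=\langle\sigma\rangle$ of order $p^h$, Theorem \ref{tsouk1} forces each elementary abelian quotient $G_{b_i}/G_{b_{i+1}}$ to be cyclic of order $p$, so that $\mu=h$ and the tower of fields is the one drawn in the diagrams above, with $F_{i+1}/F_i$ a degree-$p$ Artin--Schreier extension. The preceding computation in the excerpt already shows $H^1(\Z/p^i\Z, k_{\mathbf n,\bar m_i}[\bar f_0,\ldots,\bar f_{i-1}])$ equals the space of $\Z/p^i\Z$-coinvariants, with the action of $\sigma$ on the module being through its image in $\Gal(F_{i+1}/F_1)$ (where $\sigma$ has order $p^i$). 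By Theorem \ref{kernelCoho} the HKG-cover is determined by the classes $\bar C_i$ in these cohomology groups subject to $P_i(\bar C_i)=0$; in the cyclic case, after rescaling $\bar f_i$ by $\bar C_i$ (as in Remark \ref{remLmult} and the discussion around eq.\ (\ref{compat*})), the additive polynomial $P_i$ becomes $Y^p-Y$, so the compatibility condition $P_i(\bar C_i)=0$ reads precisely ``$\bar C_i^p-\bar C_i$ is zero in the coinvariant space $k_{\mathbf n,\bar m_i}[\bar f_0,\ldots,\bar f_{i-1}]_{\Z/p^i\Z}$.'' This gives the first assertion.

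For the order condition, I would argue as follows. The canonical uniformizer attached to $\bar f_s=f_r$ gives, by Theorem \ref{explicitForm}, an embedding $\Phi\colon G\hookrightarrow\Aut(k[[t]])$ with $\Phi(\sigma)(t)=t(1+\bar C_s(\sigma)t^m)^{-1/m}$, and $\Phi$ is a monomorphism; hence $\sigma$ has order $p^h$ in $\Aut(k[[t]])$ if and only if it has order $p^h$ in $G$, i.e.\ if and only if $\sigma^{p^\nu}\neq 1$ for every $0\le\nu<h$. Since $\Phi$ is injective, $\sigma^{p^\nu}\neq 1$ is equivalent to $\bar C_s(\sigma^{p^\nu})\neq 0$. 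Finally, the cocycle relation $\bar C_s(\sigma\tau)=\bar C_s(\sigma)+\sigma\bar C_s(\tau)$, applied inductively, gives $\bar C_s(\sigma^{p^\nu})=(1+\sigma+\cdots+\sigma^{p^\nu-1})\bar C_s(\sigma)$, which rewrites the condition in the form displayed in the proposition. This is essentially a bookkeeping step once $\Phi$ is in hand.

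The one point requiring genuine care — the main obstacle — is making sure that the ``non-vanishing'' of $\bar C_s(\sigma^{p^\nu})$ is being tested in the right space, and that it is not accidentally killed by passing to a cohomology class. One must distinguish the cocycle $\bar C_s$ (a genuine function $G\to k_{\mathbf n,m}[\bar f_0,\ldots,\bar f_{s-1}]$ determined by the chosen generators $\bar f_i$) from its class; the order statement is about the cocycle, and $\bar C_s(\sigma^{p^\nu})=0$ must be read as an honest equality in $k_{\mathbf n,m}[\bar f_0,\ldots,\bar f_{s-1}]$, not modulo coboundaries. I would therefore spell out that, with the canonical uniformizer fixed, the value $\bar C_s(\sigma^{p^\nu})$ is unambiguous, and that $\Phi(\sigma^{p^\nu})=\mathrm{id}$ forces $\sigma^{p^\nu}(t)=t$ hence $\bar C_s(\sigma^{p^\nu})=0$ and conversely, exactly as in the kernel computation at the end of the proof of Theorem \ref{explicitForm}. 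With that clarification the proposition follows by combining the cited results.
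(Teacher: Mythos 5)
Your proposal is correct and takes essentially the same route as the paper, which proves Proposition \ref{22prop} by exactly the preceding discussion: the cyclic case forces degree-$p$ Artin--Schreier steps, rescaling turns $P_i$ into $Y^p-Y$, the compatibility relation in eq.~(\ref{compat*}) read inside $H^1(\Z/p^i\Z,-)\cong$ coinvariants gives the first assertion, and the order criterion follows from the faithfulness of $\Phi$ in Theorem \ref{explicitForm} together with the cocycle expansion $\bar{C}_s(\sigma^{p^\nu})=(1+\sigma+\cdots+\sigma^{p^\nu-1})\bar{C}_s(\sigma)$. Your added remark that the non-vanishing is tested on the actual cocycle value (for the fixed canonical uniformizer), not on its class modulo coboundaries, is a correct and worthwhile clarification consistent with the paper's use of the kernel computation in Theorem \ref{explicitForm}.
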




\begin{thebibliography}{10}

\bibitem{Bleher2017}
Frauke~M. Bleher, Ted Chinburg, Bjorn Poonen, and Peter Symonds.
\newblock Automorphisms of {H}arbater--{K}atz--{G}abber curves.
\newblock {\em Mathematische Annalen}, 368(1):811--836, Jun 2017.

\bibitem{camina1997subgroups}
Rachel Camina.
\newblock Subgroups of the {N}ottingham group.
\newblock {\em Journal of Algebra}, 196(1):101--113, 1997.

\bibitem{camina2000nottingham}
Rachel Camina.
\newblock The {N}ottingham group.
\newblock In {\em New horizons in pro-p groups}, pages 205--221. Springer,
  2000.

\bibitem{MR2441248}
T.~Chinburg, R.~Guralnick, and D.~Harbater.
\newblock Oort groups and lifting problems.
\newblock {\em Compos. Math.}, 144(4):849--866, 2008.

\bibitem{MR2919977}
Ted Chinburg, Robert Guralnick, and David Harbater.
\newblock The local lifting problem for actions of finite groups on curves.
\newblock {\em Ann. Sci. \'{E}c. Norm. Sup\'{e}r. (4)}, 44(4):537--605, 2011.

\bibitem{MR3591155}
Ted Chinburg, Robert Guralnick, and David Harbater.
\newblock Global {O}ort groups.
\newblock {\em J. Algebra}, 473:374--396, 2017.

\bibitem{Garcia1991}
Arnaldo Garcia and Henning Stichtenoth.
\newblock Elementary abelianp-extensions of algebraic function fields.
\newblock {\em manuscripta mathematica}, 72(1):67--79, Dec 1991.

\bibitem{Goss1996}
David Goss.
\newblock {\em Basic structures of function field arithmetic}, volume~35 of
  {\em Ergebnisse der Mathematik und ihrer Grenzgebiete (3) [Results in
  Mathematics and Related Areas (3)]}.
\newblock Springer-Verlag, Berlin, 1996.

\bibitem{harbater1980moduli}
David Harbater.
\newblock Moduli of p-covers of curves.
\newblock {\em Communications in Algebra}, 8(12):1095--1122, 1980.

\bibitem{Jac2}
Nathan Jacobson.
\newblock {\em Basic algebra. {I}{I}}.
\newblock W. H. Freeman and Company, New York, second edition, 1989.

\bibitem{jennings1954substitution}
Stephen~Arthur Jennings.
\newblock Substitution groups of formal power series.
\newblock {\em Canad. J. Math}, 6:325--340, 1954.

\bibitem{Karanikolopoulos2013}
Sotiris Karanikolopoulos and Aristides Kontogeorgis.
\newblock Automorphisms of curves and {W}eierstrass semigroups for
  {H}arbater-{K}atz-{G}abber covers.
\newblock {\em Trans. Amer. Math. Soc.}, 371(9):6377--6402, 2019.

\bibitem{katz1986local}
Nicholas~M Katz.
\newblock Local-to-global extensions of representations of fundamental groups.
\newblock {\em Ann. Inst. Fourier (Grenoble)}, 36(4):69--106, 1986.

\bibitem{klopsch2000automorphisms}
Benjamin Klopsch.
\newblock Automorphisms of the {N}ottingham group.
\newblock {\em Journal of Algebra}, 223(1):37--56, 2000.

\bibitem{kontogeorgis2008ramification}
Aristides Kontogeorgis.
\newblock The ramification sequence for a fixed point of an automorphism of a
  curve and the weierstrass gap sequence.
\newblock {\em Mathematische Zeitschrift}, 259(3):471--479, 2008.

\bibitem{lubin2011torsion}
Jonathan Lubin.
\newblock Torsion in the {N}ottingham group.
\newblock {\em Bulletin of the London Mathematical Society}, 43(3):547--560,
  2011.

\bibitem{NeukirchCohoNumFields}
J\"urgen Neukirch, Alexander Schmidt, and Kay Wingberg.
\newblock {\em Cohomology of number fields}, volume 323 of {\em Grundlehren der
  Mathematischen Wissenschaften [Fundamental Principles of Mathematical
  Sciences]}.
\newblock Springer-Verlag, Berlin, second edition, 2008.

\bibitem{Obus12}
Andrew Obus.
\newblock The (local) lifting problem for curves.
\newblock In {\em Galois-{T}eichm\"uller theory and arithmetic geometry},
  volume~63 of {\em Adv. Stud. Pure Math.}, pages 359--412. Math. Soc. Japan,
  Tokyo, 2012.

\bibitem{ObusWewers}
Andrew Obus and Stefan Wewers.
\newblock Cyclic extensions and the local lifting problem.
\newblock {\em Ann. of Math. (2)}, 180(1):233--284, 2014.

\bibitem{MR3194816}
Florian Pop.
\newblock The {O}ort conjecture on lifting covers of curves.
\newblock {\em Ann. of Math. (2)}, 180(1):285--322, 2014.

\bibitem{serre2013local}
Jean-Pierre Serre.
\newblock {\em Local fields}, volume~67.
\newblock Springer Science \& Business Media, 2013.

\bibitem{MR1084537}
I.~R. Shafarevich.
\newblock Abelian and nonabelian mathematics.
\newblock {\em Math. Intelligencer}, 13(1):67--75, 1991.
\newblock Translated from the Russian by Smilka Zdravkovska.

\bibitem{Stichtenothv2009}
Henning Stichtenoth.
\newblock {\em Algebraic function fields and codes}, volume 254 of {\em
  Graduate Texts in Mathematics}.
\newblock Springer-Verlag, Berlin, second edition, 2009.

\bibitem{DjurreMaster}
Djurre Tijsma.
\newblock Automata and finite order elements in the {N}ottingham group.
\newblock Master's thesis, Utrecht University, 2018.

\bibitem{Weibel}
Charles~A. Weibel.
\newblock {\em An introduction to homological algebra}.
\newblock Cambridge University Press, Cambridge, 1994.

\bibitem{Weiss}
Edwin Weiss.
\newblock {\em Cohomology of groups}.
\newblock Academic Press, New York, 1969.

\end{thebibliography}
\end{document}